\documentclass[11pt,a4paper]{article}
\usepackage{geometry}
\usepackage{graphicx}
\usepackage{bm,array}
\usepackage{comment}
\usepackage{caption}
\usepackage{subcaption}
\usepackage{tablefootnote}
 \usepackage{makecell}
\usepackage{booktabs}
 \usepackage{multirow}
 \usepackage{float}
 \usepackage{algorithmic}
 \usepackage{tikz}
 \usetikzlibrary{arrows.meta, positioning, shapes.geometric, shadows}
\usepackage[normalem]{ulem}

\usepackage[pdfpagemode={UseOutlines},bookmarks=true,bookmarksopen=true,
bookmarksopenlevel=0,bookmarksnumbered=true,hypertexnames=false,
colorlinks,linkcolor={blue},citecolor={blue},urlcolor={red},
pdfstartview={FitV},unicode,breaklinks=true]{hyperref}
\hypersetup{urlcolor=blue, colorlinks=true}

\usepackage{setspace}
\usepackage{vmargin}
\setmarginsrb{ 1.0in}  % left margin
{ 0.6in}  % top margin
{ 1.0in}  % right margin
{ 0.8in}  % bottom margin
{  20pt}  % head height
{0.25in}  % head sep
{9pt}  % foot height
{ 0.3in}  % foot sep

\usepackage{authblk}

\usepackage{amsmath}
\usepackage{amsfonts}
\usepackage{amssymb}
\usepackage[round, sort,comma,authoryear]{natbib}

\newtheorem{theorem}{Theorem}

\newtheorem{proposition}{Proposition}
\newtheorem{remark}{Remark}

\newenvironment{proof}[1][Proof]{\noindent\textbf{#1.} }{\ \rule{0.5em}{0.5em}}

\usepackage[ruled,vlined]{algorithm2e}

\usepackage{multirow}% http://ctan.org/pkg/multirow
\usepackage{hhline}% http://ctan.org/pkg/hhline
\usepackage{footnote}
\usepackage[ruled,vlined]{algorithm2e}
\usepackage{algorithmic}

\newcommand{\cP}{{\mathcal{P}}}

\newcommand{\by}{\textbf{y}}
\newcommand{\bs}{\textbf{s}}

\newcommand{\bu}{\textbf{u}}

\newcommand{\bp}{\textbf{p}}
\newcommand{\ba}{\textbf{a}}
\newcommand{\bv}{\textbf{v}}
\newcommand{\bz}{\textbf{z}}

\newcommand{\btheta}{\pmb{\theta}}

\usepackage{mathtools}

\usepackage{mathtools}

\newif\ifnotes\notestrue
\def\htien#1{}

\usepackage{xcolor}

\begin{document}
%%%%%%%%%%%%%%%%

%    \RUNTITLE{Joint Facility Location and Cost Optimization}

% Full title. Sample:
% \TITLE{Bundling Information Goods of Decreasing Value}
% Enter the full title:
%\TITLE{Joint  Location and Cost Planning in Maximum Capture Facility Location under Multiplicative Random Utility Maximization}

% Block of authors and their affiliations starts here:
% NOTE: Authors with same affiliation, if the order of authors allows,
%should be entered in ONE field, separated by a comma.
%\EMAIL field can be repeated if more than one author
%\ARTICLEAUTHORS{%
%\AUTHOR{Ngan Ha Duong, Tien Thanh Dam, Thuy Anh Ta}
%\AFF{Department of Computer Science, Phenikaa University}
%\AUTHOR{Tien Mai}
%\AFF{School of Computing and Information Systems, Singapore Management University, \EMAIL{atmai@smu.edu.sg}}
% Enter all authors
%} % end of the block

%\ABSTRACT{%
%...
%}%

% Sample
%\KEYWORDS{deterministic inventory theory; infinite linear programming duality;
%  existence of optimal policies; semi-Markov decision process; cyclic schedule}

% Fill in data. If unknown, outcomment the field
%\KEYWORDS{Maximum capture, cost optimization,  multiplicative random utility, mixed-integer program} 
%\HISTORY{This paper wasfirst submitted on April 12, 1922 and has been with the authors for 83 years for 65 revisions.}

%\maketitle
\newcolumntype{C}{>{\centering\arraybackslash}p{4em}}

\title{\textbf{Constrained Pricing under Finite Mixtures of Logit}}
\author[1]{Hoang Giang Pham}
%\author[1]{Tien Thanh Dam}
%\author[1]{Thuy Anh Ta}
\author[1]{Tien Mai}
\affil[1]{\it\small
School of Computing and Information Systems, Singapore Management University}
%\affil[4]{\it\small Corresponding author}

\date{}
\maketitle

% \pagebreak
% \begin{center}
% 	\linespread{1.5}
% 	\LARGE
% 	\bf
% 	{Joint  Location and Cost Planning in Maximum Capture Facility Location under Multiplicative Random Utility Maximization}
% \end{center}

\begin{abstract}
The mixed logit model is a flexible and widely used demand model in pricing and revenue management. However, existing work on mixed-logit pricing largely focuses on unconstrained settings, limiting its applicability in practice where prices are subject to business or regulatory constraints. We study the \emph{constrained pricing problem} under multinomial and mixed logit demand models.
For the multinomial logit model, corresponding to a single customer segment, we show that the constrained pricing problem admits a \emph{polynomial-time approximation scheme (PTAS)} via a reformulation based on \emph{exponential cone programming}, yielding an $\varepsilon$-optimal solution in polynomial time. For finite mixed logit models with $T$ customer segments, we reformulate the problem as a \emph{bilinear exponential cone program} with $O(T)$ bilinear terms. This structure enables a \emph{branch-and-bound} algorithm whose complexity is exponential only in $T$. Consequently, constrained pricing under finite mixtures of logit admits a PTAS when the number of customer segments is bounded. Numerical experiments demonstrate strong performance relative to state-of-the-art baselines.
\end{abstract}

\noindent
\textbf{Notation:}
Boldface characters represent matrices (or vectors), and $a_i$ denotes the $i$-th element of vector $\ba$. We use $[m]$, for any $m\in \mathbb{N}$, to denote the set $\{1,\ldots,m\}$.

\section{Introduction}
\subsection{Motivation} Pricing is a central lever in revenue management and plays a decisive role in shaping firms’ profitability and market performance \citep{talluri2004,phillips2005}. By directly influencing both demand levels and substitution patterns across products, pricing decisions affect not only short-term revenue but also long-term competitive positioning \citep{gallego1994,anderson1992}. As a result, the effectiveness of any pricing strategy critically depends on the underlying demand model’s ability to accurately capture customer behavior in response to price changes \citep{hanson1996_ms,train2009,guadagni2008_mkts}.

Discrete choice models have become the dominant framework for modeling customer purchase behavior in pricing and assortment optimization \citep{talluri2004,li2011_msom,Rusmevichientong2014_pom}. Among these, the mixed logit  model has emerged as a particularly powerful and widely used approach, as it captures customer heterogeneity by allowing for multiple latent customer segments with distinct preference structures \citep{mcfadden2000,ruben2022mnsc}. By relaxing the restrictive homogeneity assumptions of simpler models, the mixed logit model provides a significantly more realistic representation of observed purchasing behavior and is now commonly employed in both academic research and practical revenue management applications \citep{train2009,Hensher2003ThePractice}.

Despite the appeal of rich choice models, the resulting pricing optimization problems are notoriously challenging. Even in the simplest setting of a multinomial logit (MNL) model with a single customer segment, the revenue function is generally nonconcave in prices, complicating global optimization \citep{li2011_msom}. The difficulty is further exacerbated under more expressive models such as nested logit or mixed logit, where the revenue landscape may contain multiple local optima \citep{Li2019_msom}. These nonconvexities make pricing optimization particularly challenging when prices are subject to practical constraints—such as bounds, capacity constraints, or business rules—since standard gradient-based or local-search methods may converge to highly suboptimal solutions. Most existing work in the pricing literature addresses this challenge by focusing either on relatively simple choice models, most notably the MNL model, or on unconstrained pricing settings. While these approaches yield valuable theoretical insights, they leave a significant gap between theory and practice. In real-world applications, pricing decisions are almost always constrained by operational, regulatory, or managerial considerations, and customer demand is rarely well approximated by a single homogeneous segment. As a consequence, there is currently a lack of efficient and provably reliable methods for solving constrained pricing problems under rich demand models such as the mixed logit model.

\subsection{Technical Contributions.}
In this paper, we address this gap by developing \emph{novel PTASs} for constrained price optimization under MNL and finite-mixtures of logit (FMNL) demand models. Our results provide the first global optimization guarantees for constrained pricing problems under rich choice models that capture customer heterogeneity. Specifically, we make the following contributions:
\begin{itemize}
\item \textit{Constrained pricing under the MNL model.}
We first study pricing under the MNL model, one of the most widely used choice models in practice. While existing work on pricing under MNL demand either focuses primarily on the unconstrained setting on prices \cite{li2011_msom,zhang2018_opre}, or relies on mixed-integer linear programming (MILP)–based formulations that do not admit polynomial-time guarantees \citep{Shao2026ConstrainedManagement}, we show that the constrained pricing  problem (with bound and linear constraints on the prices) admits a \emph{PTAS}. Our approach is based on a novel convex exponential-cone reformulation combined with a bisection procedure, yielding global approximation guarantees despite the nonconcavity of the revenue function in prices.

\item \textit{Constrained pricing under FMNL models.}  
We then study constrained pricing under FMNL models, which capture customer heterogeneity through multiple latent segments and are significantly more expressive than MNL. We reformulate the problem as a \emph{bilinear--convex program} in which all nonconvexities are isolated into a small number of bilinear terms—one per customer segment. Exploiting this structure, we propose a tailored branch-and-bound (B\&B) algorithm whose worst-case complexity is exponential only in the number of segments and polynomial in all other problem dimensions. To the best of our knowledge, this is the \emph{first PTAS for constrained pricing under FMNL models}.

\item \textit{Computational experiments.}  
We conduct extensive numerical experiments to evaluate the performance of the proposed algorithms. The results demonstrate that our methods not only provide strong theoretical guarantees but also consistently outperform state-of-the-art baselines in terms of solution quality and robustness, particularly in settings with practical pricing constraints.
\end{itemize}

In summary, we develop the first PTAS algorithms for constrained pricing under FMNL demand. Our approach achieves provable polynomial-time approximation guarantees while simultaneously delivering superior empirical performance, thereby bridging a critical gap between practical pricing requirements and algorithmic tractability.

\subsection{Literature Review}

%Pricing is a central problem in revenue management, as prices directly shape demand levels, substitution patterns, and ultimately firm profitability. The effectiveness of pricing decisions therefore hinges on the ability of demand models to accurately predict customer responses to price changes. Discrete choice models have become the dominant framework for this purpose, as they provide a principled representation of individual utility maximization and aggregate demand behavior. Among these models, the mixed logit (or finite-mixture logit, FMNL) model has gained particular prominence due to its ability to capture heterogeneity in customer preferences, which is critical for realistic pricing and revenue management applications.

In discrete choice modeling, the MNL model \cite{Luce1959,train2009,guadagni2008_mkts} remains the most widely used customer choice model in both academic and practitioner settings. In the context of price optimization, \cite{hanson1996_ms} demonstrated via counterexample that the MNL profit function is not quasiconcave in prices, and proposed a path-following heuristic to address this challenge. Subsequent work showed that the difficulty can be mitigated by reformulating the problem in terms of purchase probabilities (market shares), under which the objective becomes concave \citep{dong2009_msom,li2011_msom,song2021_ssrn}. This observation enables efficient optimization, since prices and market shares are in one-to-one correspondence. Recent work by \cite{Shao2026ConstrainedManagement} develops a MILP reformulation for the constrained pricing problem under the MNL model. While this approach is flexible and can accommodate general linear constraints on prices, solving MILPs is in general computationally intractable in the worst case and does not admit polynomial-time performance guarantees. As a result, such methods may face significant scalability challenges as the problem size increases.

Beyond MNL, richer discrete choice models have been proposed to capture more complex substitution patterns. The nested logit model introduces a hierarchical choice structure in which consumers first select a nest of products before choosing a specific item. Under constant price sensitivities within nests, the profit function remains concave in purchase probabilities \cite{li2011_msom}. When price sensitivities vary across products, \cite{gallego2014_opre} show that an adjusted markup remains constant within each nest. Extensions to multi-level nested structures have also been studied \citep{li2015_opre,huh2015_opre}, although determining appropriate nest structures in practice remains challenging \cite{koppelman2006}. Other choice frameworks considered in the pricing literature include the exponomial model \cite{alptekinouglu2016_opre}, paired combinatorial logit \cite{li2017_opre}, generalized extreme value models \cite{zhang2018_opre}, and Markov chain choice models \cite{dong2019_pom}.

Despite these advances, most pricing models beyond MNL rely on restrictive assumptions of consumer homogeneity. With the notable exception of \cite{hanson1996_ms}, these works typically assume identical preferences or price sensitivities across the population. This simplification is well known to be problematic: ignoring heterogeneity leads to biased parameter estimates \cite{hsiao2022analysis} and systematically suboptimal pricing decisions, a concern already emphasized in early empirical studies such as \cite{guadagni2008_mkts}.

The FMNL model addresses this limitation by representing the population as a mixture of latent customer segments, each governed by its own MNL parameters. While FMNL models are widely used for demand estimation and assortment optimization, the literature on FMNL-based price optimization remains limited \citep{hanson1996_ms,Li2019_msom,ruben2022mnsc}. A key difficulty is that the FMNL revenue function is generally neither concave nor quasiconcave, even when expressed in terms of purchase probabilities \cite{Li2019_msom}. As a result, most existing approaches rely on local search, gradient-based heuristics, or bisection methods that lack global optimality guarantees, particularly in the presence of price constraints.  Recent work has begun to explore constrained pricing under richer discrete choice models. In particular, \cite{pham2026constrained} study constrained assortment and price optimization under the generalized nested logit model and propose a global optimization approach for settings with heterogeneous customer segments. Their method is based on piecewise linear approximation and mixed-integer nonlinear programming (MINLP). While flexible, such formulations are generally computationally demanding and do not admit polynomial-time performance guarantees. Most closely related to our work is \cite{ruben2022mnsc}, who propose a global optimization framework for unconstrained pricing under FMNL demand and establish a PTAS whose complexity is polynomial in the number of products and exponential only in the number of customer segments. While this represents a significant theoretical advance, their approach does not address constrained pricing, which is ubiquitous in practice due to capacity limits, pricing rules, and regulatory or managerial considerations.

In contrast to the existing literature, our work focuses explicitly on \emph{constrained} pricing under both MNL and FMNL demand. We develop approximation algorithms with global performance guarantees that remain polynomial-time for a fixed number of customer segments, even in the presence of general linear price constraints. By doing so, our results bridge a critical gap between expressive demand modeling and tractable, globally reliable pricing optimization methods.

\subsection{Managerial Insights.}
Our results offer several insights for managers responsible for pricing decisions in environments with heterogeneous customers and operational constraints.

First, our findings highlight the \textit{risk of relying on local or heuristic pricing methods} when customer heterogeneity is present. Even with a small number of customer segments, the revenue function under FMNL demand can exhibit multiple local optima. As a result, gradient-based or locally convergent methods may yield pricing policies that are significantly suboptimal. This underscores the importance of using globally reliable optimization tools when pricing decisions have material financial impact.

Second, the results emphasize that \textit{pricing constraints should be treated as a first-order modeling consideration rather than an afterthought}. Common practices such as projecting unconstrained optimal prices onto feasible regions can lead to substantial revenue loss. Our framework shows that explicitly accounting for constraints—such as price bounds, budget limits, or portfolio-level pricing rules—during optimization can materially improve performance relative to unconstrained or heuristic approaches.

Third, the paper demonstrates that \textit{rich demand models and computational tractability need not be mutually exclusive}. While FMNL models are often perceived as too complex for practical pricing optimization, our results show that when the number of customer segments is moderate, firms can obtain provably near-optimal prices in polynomial time. This suggests that managers can adopt more expressive demand models without sacrificing decision reliability, provided that the optimization is designed to exploit problem structure.

Finally, our findings suggest that \textit{managerial effort should focus on identifying a small number of meaningful customer segments}, rather than over-segmenting demand. Since the computational complexity scales primarily with the number of segments, a parsimonious segmentation that captures key behavioral differences can deliver most of the benefits of heterogeneous pricing while maintaining computational efficiency.

Overall, the paper provides a framework that enables managers to make pricing decisions that are both operationally feasible and globally near-optimal, even in the presence of complex demand interactions and practical business constraints.

\noindent\textbf{Paper Outline:}
The remainder of the paper is organized as follows. 
Section~\ref{sec:formulation} introduces the constrained pricing problem under MNL and FMNL demand models. 
Section~\ref{sec:pricing-MNL} studies constrained pricing under the MNL model and presents a PTAS based on an exponential-cone reformulation and a bisection scheme. 
Section~\ref{sec:pricing-FMNL} extends the analysis to FMNL models, develops a bilinear--convex reformulation, and proposes a global B\&B algorithm with complexity guarantees. 
Section~\ref{sec:experiments} reports numerical experiments. 
Section~\ref{sec:concl} concludes. 
The appendix contains proofs omitted from the main text and additional experimental results.

\section{Problem Formulations}\label{sec:formulation}
We consider the constrained pricing problem for a fixed assortment of products. The objective is to determine optimal prices for the offered products so as to maximize expected revenue, where customer demand is modeled using a FMNL model.
\subsection{Model Setup}
Let $[m]=\{1,2,\ldots,m\}$ denote the set of products. Each product $i \in [m]$ is associated with a decision variable $p_i \in [L_i,U_i]$, representing its price within lower and upper bounds $L_i$ and $U_i$. The deterministic component of the utility of product $i$ is assumed to be affine in price, i.e.,
$u_{ti}(p_i) = a_{ti} - b_{i} p_i,$
where $b_{i}>0$ denotes the price sensitivity of customer type $t \in [T]$ for product $i$ and  $a_{ti}$ captures other factors that affect customers choice probabilities.
We assume that price sensitivity parameters are independent of the customer segment. This assumption is critical for the tractability of the pricing problem and underpins the PTAS developed in this paper. When price sensitivities vary across segments, the resulting revenue function exhibits significantly more complex nonconvex interactions, which preclude efficient global approximation with current techniques. The same assumption is adopted in \cite{ruben2022mnsc}, where the authors emphasize its necessity for establishing PTAS guarantees even in the unconstrained pricing setting. Our work inherits this structural requirement and demonstrates that, under this assumption, global approximation guarantees can be extended to constrained pricing problems.

In the FMNL model, each customer belongs to one of $T$ segments, with probabilities $d_t>0$ satisfying $\sum_{t=1}^T d_t=1$. Conditional on type $t$, the probability that the customer purchases product $i$ at price vector $\bp=(p_1,\ldots,p_m)$ is: 
\[
P_t(i \mid \bp) = \frac{\exp(u_{ti}(p_i))}{1+\sum_{j\in[m]} \exp(u_{tj}(p_j))},
\]
where the denominator includes the ``no-purchase'' option with normalized utility $0$. The unconditional choice probability under the FMNL model is
$P(i \mid \bp) = \sum_{t=1}^T d_t P_t(i \mid \bp).$
The expected revenue from offering all products at prices $\bp$ is
\begin{align}
R(\bp) &= \sum_{i\in[m]} p_i \, P(i \mid \bp) =  \sum_{t=1}^T d_t \frac{ \sum_{i\in[m]} p_i\exp(a_{ti}-b_{i}p_i)}{1+\sum_{j\in[m]} \exp(a_{tj}-b_{j}p_j)}.
\label{eq:MXL-revenue}
\end{align}
% The objective function of the pricing problem under the FMNL model is highly nonconvex in prices due to its inherent sum-of-ratios structure. The expected revenue is a weighted sum of fractional terms, each corresponding to a customer segment, where both the numerator and denominator depend nonlinearly on prices. This structure destroys the unimodality properties present in the single-segment MNL case \citep{li2011_msom} and generally leads to a revenue landscape with multiple local optima.

\subsection{Constrained Price Optimization Problem}
While much of the existing literature focuses on unconstrained pricing problems, such formulations often fall short in real-world business settings, where pricing decisions are rarely made in isolation. In practice, firms operate under a wide range of constraints driven by operational, regulatory, contractual, and managerial considerations. For instance, lower and upper bounds on prices are commonly imposed to ensure minimum profit margins, comply with price-floor regulations, or maintain brand positioning and customer perceptions. Excessively low prices may erode brand value, while excessively high prices may violate fairness or anti-gouging regulations, especially in regulated industries such as utilities, healthcare, and transportation.

Beyond individual price bounds, pricing decisions across products are frequently coupled through aggregate or portfolio-level constraints. Retailers may impose budget or capacity constraints that limit the total revenue-weighted or volume-weighted prices across a product assortment. In subscription-based or bundled services, firms often require prices to satisfy linear relationships to ensure internal consistency across plans or tiers. Similarly, platform operators and regulators may enforce caps on weighted average prices, where weights reflect market shares, production costs, or social priorities. Such constraints naturally arise in revenue management settings with shared resources, such as airline seat capacity, hotel room inventory, or cloud computing infrastructure.

To capture these practically relevant considerations, we model feasible prices using a general set of linear constraints:
\begin{equation}  \label{eq:p-constraints}
\mathcal{P} = \left\{ \mathbf{p} \in \mathbb{R}^m \;\middle|\;
\begin{aligned}
& L_i \leq p_i \leq U_i, && \forall i \in [m], \\
& \sum_{i=1}^m \alpha_{ki} p_i \leq \beta_k, && \forall k \in [K]
\end{aligned}
\right\}.
\end{equation}
where $[K] = \{1,\ldots,K\}$ and  the coefficients satisfy $\alpha_{ki} \geq 0$ for all $i\in [m], k\in [K]$. The price optimization problem under the FMNL model is then formulated as
\begin{align}
\max_{\bp \in \cP} \quad & \left\{R(\bp) = \sum_{i\in[m]} p_i \, P(i \mid \bp) =  \sum_{t=1}^T d_t \frac{ \sum_{i\in[m]} p_i\exp(a_{ti}-b_{i}p_i)}{1+\sum_{j\in[m]} \exp(a_{tj}-b_{j}p_j)} \right\}.
\tag{\sf Price-FMNL} \label{prob:Price-MXL}
\end{align}

Problem~\eqref{prob:Price-MXL} is highly nonlinear and, in general, non-concave. The mixture across multiple customer types $t \in [T]$ breaks the convenient unimodality structure present in the standard MNL model \citep{li2011_msom}. For the FMNL model, $R(\bp)$ may contain multiple local maxima, and gradient-based methods can converge to suboptimal solutions.  
Intuitively, different customer segments may favor different price levels due to heterogeneous valuations, creating competing incentives in the objective function. A price that is nearly optimal for one segment may significantly reduce revenue from another, causing the aggregate revenue function to exhibit multiple peaks corresponding to different segment-driven trade-offs. As a result, the superposition of segment-level revenue curves can produce several locally optimal price points. Figure~\ref{fig:FMNL_example} illustrates this behavior in a simple setting with a single product ($m=1$). As the number of customer segments increases ($T=2,3,4$), the revenue function becomes increasingly multi-modal, highlighting the limitations of standard gradient-based local search approaches.

\begin{figure}[htb]
    \centering
    \includegraphics[width=\linewidth]{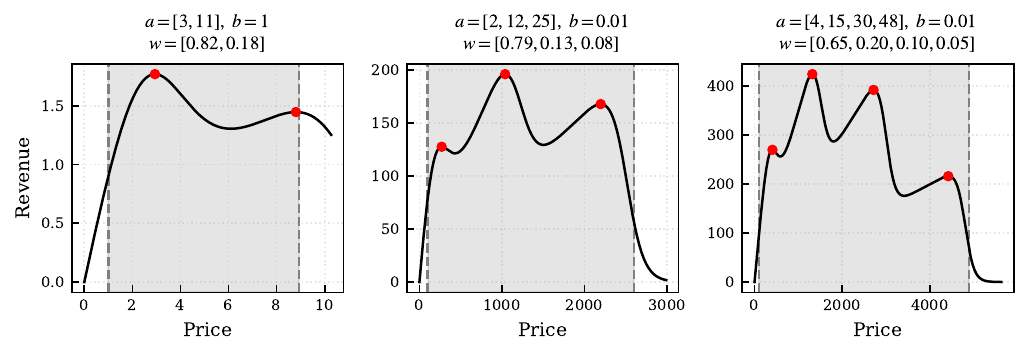}
    \caption{Revenue function under the FMNL model when $m=1$ and $T \in \{2,3,4\}$.}
    \label{fig:FMNL_example}
\end{figure}

\section{PTAS for Constrained Pricing under MNL}\label{sec:pricing-MNL}
In this section, we study the constrained pricing problem under the MNL demand model. While pricing under MNL demand has been extensively studied in the literature and is not the primary focus of this work, its relatively simple structure compared to FMNL models allows us to derive sharp algorithmic insights. We show that, when prices are subject to  linear constraints in \eqref{eq:p-constraints}, the constrained MNL-based pricing problem admits a PTAS based on a convex reformulation and a bisection procedure. This analysis serves as a foundation for the more general FMNL setting studied in subsequent sections.

\subsection{PTAS via Bisection and Exponential Cone Programming}
We now describe our approach for constructing a PTAS for the pricing problem under MNL, based on a bisection procedure and exponential cone programming (ECP). The key idea is to reformulate the constrained MNL pricing problem as a parametric feasibility problem indexed by a scalar revenue parameter. For any fixed parameter value, feasibility can be checked by solving a convex optimization problem that admits an ECP. By exploiting the monotonicity of the feasibility condition with respect to the parameter, we apply a bisection scheme to iteratively refine the parameter value and recover an $\varepsilon$-optimal solution. This combination of convex reformulation and bisection yields a globally reliable approximation algorithm with polynomial-time complexity.

Under the MNL model, the constrained pricing problem can be stated as follows:
\begin{align*}
     \max_{\bp \in \cP} \quad & R^{\text{MNL}}(\bp) =  
     \frac{ \sum_{i\in[m]} p_i \exp(a_{i}-b_{i}p_i)}{1+\sum_{j\in[m]} \exp(a_{j}-b_{j}p_j)}.
\end{align*}
Since the objective function under the MNL model is fractional in prices, we leverage a standard Dinkelbach-type (or fractional programming) transformation \citep{Dinkelbach1967} to convert the original problem into a parametric feasibility problem. Specifically, the constrained pricing problem can be reformulated as a univariate optimization problem over a scalar parameter $\theta$:
\[
\max_{\theta>0} \left\{ \theta \;\middle|\; \max_{\bp\in \mathcal{P}} \, G(\bp,\theta)\geq 0 \right\},
\]
where
\[
G(\bp,\theta) 
= \sum_{i\in[m]} p_i \exp(a_i-b_i p_i)
- \theta\left(1+\sum_{j\in[m]} \exp(a_j-b_j p_j)\right).
\]
Let
$\phi(\theta) := \max_{\bp\in\mathcal{P}} G(\bp,\theta)$
denote the optimal value of this inner problem. It is straightforward to verify that $\phi(\theta)$ is continuous and strictly decreasing in $\theta$. Intuitively, increasing $\theta$ penalizes the denominator term more heavily, thereby reducing the maximum achievable value of $G(\bp,\theta)$. As a result, the problem
$\max_{\theta>0} \left\{ \theta \;\middle|\; \phi(\theta)\geq 0 \right\}$
can be solved efficiently via a bisection (binary search) procedure over $\theta$. At each iteration, feasibility is checked by solving the sub-problem $ \max_{\bp\in\mathcal{P}} G(\bp,\theta)$.
\vspace{0.5em}

\noindent\textbf{Bisection.} We briefly describe the bisection procedure used to solve the constrained pricing problem under the MNL model. The algorithm starts from an initial interval $[\theta_{\min}, \theta_{\max}]$ that is guaranteed to contain the optimal value. In practice, these bounds can be chosen tightly: a valid lower bound $\theta_{\min}$ can be obtained from any feasible pricing solution (e.g., a heuristic or uniform pricing policy), while an upper bound $\theta_{\max}$ can be derived from simple relaxations of the problem, such as ignoring price constraints or bounding prices by $U_i$. Tighter initial bounds substantially reduce the number of bisection iterations required. At each iteration, we set $\theta$ to the midpoint of the current interval and solve the associated subproblem $\max_{\bp \in \mathcal{P}} G(\bp,\theta)$. If the optimal value of this subproblem is nonnegative, then $\theta$ is feasible and the lower bound is updated by setting $\theta_{\min} \leftarrow \theta$; otherwise, $\theta$ is infeasible and the upper bound is updated by setting $\theta_{\max} \leftarrow \theta$. By the strict monotonicity of the value function $\Phi(\theta)$, this procedure correctly maintains feasibility of the interval endpoints. The algorithm terminates when $\theta_{\max}-\theta_{\min}\leq \varepsilon$, at which point $\theta_{\min}$ is returned as an $\varepsilon$-optimal solution. Since the interval length is halved at each iteration, the number of bisection steps required is $O(\log(1/\varepsilon))$. Moreover, each iteration involves solving the subproblem $\max_{\bp\in\mathcal{P}} G(\bp,\theta)$, which, as shown later, admits a reformulation as an ECP and is therefore solvable in polynomial time.

% \begin{algorithm}[htb]
% \caption{Bisection Method for Constrained MNL Pricing}
% \label{alg:bisection_mnl}
% \begin{algorithmic}[1]
% \STATE \textbf{Input:} Tolerance $\varepsilon > 0$, bounds $\theta_{\min}, \theta_{\max}$
% \WHILE{$\theta_{\max} - \theta_{\min} > \varepsilon$}
%     \STATE $\theta \leftarrow (\theta_{\min} + \theta_{\max})/2$ \hfill \textcolor{blue}{\# Update midpoint}
%     \STATE Solve subproblem to get $v^* \leftarrow \max_{\mathbf{p} \in \mathcal{P}} G(\mathbf{p},\theta)$
    
%     \IF{$v^* \ge 0$}
%         \STATE $\theta_{\min} \leftarrow \theta$ \hfill \textcolor{blue}{\# Shift lower bound up}
%     \ELSE
%         \STATE $\theta_{\max} \leftarrow \theta$ \hfill \textcolor{blue}{\# Shift upper bound down}
%     \ENDIF
% \ENDWHILE
% \STATE \textbf{Output:} $\theta^* \leftarrow \theta_{\min}$
% \end{algorithmic}
% \end{algorithm}
\vspace{0.5em}

\noindent\textbf{Solving the subproblem $\max_{\bp\in\mathcal{P}} G(\bp,\theta)$.}
We now discuss how to efficiently solve the subproblem $\max_{\bp\in\mathcal{P}} G(\bp,\theta)$ for a fixed value of $\theta$. 
% Recall that this subproblem takes the form
% \begin{align*}
%      \max_{\bp} \quad & G(\bp,\theta )=   
%      \sum_{i\in[m]} p_i \exp(a_i-b_i p_i) 
%      -\theta\left(1+\sum_{j\in[m]} \exp(a_j-b_j p_j)\right) \\
%      \text{s.t. } \quad 
%      & \sum_{i=1}^m \alpha_{ki} p_i \leq \beta_k, \quad k\in [K], \\
%      & L_i \leq p_i \leq U_i, \quad \forall i \in [m].
% \end{align*}
The objective function $G(\bp,\theta)$ is neither convex nor concave in $\bp$ and can be viewed as a difference-of-convex function, which precludes direct application of standard convex optimization techniques. Our key idea is to apply a change of variables that transforms the problem into an equivalent convex optimization problem. Specifically, define
$u_i = \exp(a_i-b_i p_i),~\forall i\in[m],$
which implies
$p_i = \frac{a_i-\ln u_i}{b_i}.$
Substituting this transformation into the objective and constraints and rearranging terms, the subproblem can be written compactly as:
% yields an equivalent formulation in terms of $\bu=(u_1,\ldots,u_m)$:
% \begin{align*}
%      \max_{\bu} \quad & G(\bu,\theta )=   
%      \sum_{i\in[m]} \frac{u_i (a_i-\ln u_i)}{b_i} 
%      -\theta\left(1+\sum_{j\in[m]} u_j\right) \\
%      \text{s.t. } \quad 
%      & \sum_{i=1}^m \alpha_{ki} \frac{a_i-\ln u_i}{b_i} \leq \beta_k, \quad k\in [K], \\
%      & L_i \leq \frac{a_i-\ln u_i}{b_i} \leq U_i, \quad \forall i \in [m].
% \end{align*}
% Rearranging terms, the subproblem can be written compactly as
\begin{align}
\max_{\bu>0}\quad 
& G(\bu,\theta)
= -\theta + \sum_{i=1}^m \left( \frac{a_i}{b_i}\,u_i - \frac{u_i \ln u_i}{b_i} - \theta\,u_i \right) \label{prob:PO-MNL-Sub} \\
\text{s.t.}\quad
& \sum_{i=1}^m \frac{\alpha_{ki}}{b_i} \ln u_i \geq 
\sum_{i=1}^m \frac{\alpha_{ki}a_i}{b_i} - \beta_k, 
\qquad \forall k \in [K], \nonumber \\
& e^{a_i-b_i U_i} \le u_i \le e^{a_i-b_i L_i}, 
\qquad \forall i \in [m]. \nonumber
\end{align}
It can be seen that the objective in \eqref{prob:PO-MNL-Sub} is concave in $\bu$, since the function $-u_i\ln u_i$ is concave for $u_i>0$, and all remaining terms are linear. Moreover, the constraints are convex, as they involve affine bounds and linear constraints in $\ln u_i$. As a result, the subproblem reduces to a convex optimization problem that  can therefore be solved efficiently using convex optimization methods.

An important observation is that problem~\eqref{prob:PO-MNL-Sub} admits an equivalent reformulation as an ECP, which can be solved efficiently in polynomial time using off-the-shelf solvers such as MOSEK. We formalize this result in the following proposition.

\begin{proposition}\label{prop:MNL-ECP}
Problem~\eqref{prob:PO-MNL-Sub} is equivalent to the following ECP:
\begin{align*}
\max_{\bu,\bs,\bv}\quad 
& -\theta \;+\; \sum_{i=1}^m \Bigl(\tfrac{a_i}{b_i}\,u_i \;-\; \tfrac{1}{b_i}\,s_i \;-\; \theta\,u_i \Bigr) \\
\text{s.t.}\quad
& \sum_{i=1}^m \tfrac{\alpha_{ki}}{b_i}\, v_i 
\;\ge\; \sum_{i=1}^m \tfrac{\alpha_{ki} a_i}{b_i} \;-\; \beta_k,
\qquad \forall k\in[K], \\
& e^{a_i - b_i U_i} \;\le\; u_i \;\le\; e^{a_i - b_i L_i},
\qquad \forall i\in[m], \\
& (-s_i,\,u_i,\,1)\ \in\ \mathcal{K}_{\exp}, 
\qquad \forall i\in[m], \\
& (v_i,\,1,\,u_i)\ \in\ \mathcal{K}_{\exp},
\qquad \forall i\in[m],
\end{align*}
where the exponential cone is defined as
\[
\mathcal{K}_{\exp}
=\bigl\{(x,y,z)\in\mathbb{R}^3 : y>0,\; y e^{x/y}\le z\bigr\}
\;\cup\;
\bigl\{(x,0,z)\in\mathbb{R}^3 : x\le 0,\; z\ge 0\bigr\}.
\]
\end{proposition}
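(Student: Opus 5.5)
The plan is a standard epigraph/hypograph argument. The two cone constraints encode $s_i \ge u_i\ln u_i$ and $v_i \le \ln u_i$. I will show that the feasible set of the ECP, projected onto $\bu$, equals the feasible set of \eqref{prob:PO-MNL-Sub}. I will also show that for each fixed feasible $\bu$, optimizing over $(\bs,\bv)$ recovers exactly $G(\bu,\theta)$.

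\textbf{Step 1: unpacking the two cones.} Every feasible $u_i$ satisfies $u_i \ge e^{a_i-b_iU_i}>0$. So only the branch of $\mathcal{K}_{\exp}$ with positive middle coordinate is relevant for $(-s_i,u_i,1)$, and the first coordinate $y=1>0$ for $(v_i,1,u_i)$.
\begin{itemize}
\item For $(-s_i,u_i,1)\in\mathcal{K}_{\exp}$ the condition reads $u_i e^{-s_i/u_i}\le 1$. This is equivalent to $-s_i/u_i\le -\ln u_i$, that is, $s_i\ge u_i\ln u_i$.
\item For $(v_i,1,u_i)\in\mathcal{K}_{\exp}$ the condition reads $e^{v_i}\le u_i$, that is, $v_i\le \ln u_i$.
\end{itemize}
Both equivalences use only the monotonicity of $\exp$ and $\ln$ and the positivity of $u_i$.

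\textbf{Step 2: from \eqref{prob:PO-MNL-Sub} to the ECP.} Take any $\bu$ feasible for \eqref{prob:PO-MNL-Sub} and set $s_i=u_i\ln u_i$ and $v_i=\ln u_i$. By Step 1 the cone constraints hold. The linear constraint in $\bv$ coincides with the original constraint in $\ln u_i$, and the objective value equals $G(\bu,\theta)$. Hence the ECP optimum is at least the optimum of \eqref{prob:PO-MNL-Sub}.

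\textbf{Step 3: from the ECP to \eqref{prob:PO-MNL-Sub}.} Take any $(\bu,\bs,\bv)$ feasible for the ECP. Since $\alpha_{ki}\ge 0$, $b_i>0$ and $v_i\le\ln u_i$, we get
\[
\sum_i \tfrac{\alpha_{ki}}{b_i}\ln u_i \;\ge\; \sum_i \tfrac{\alpha_{ki}}{b_i}v_i \;\ge\; \sum_i \tfrac{\alpha_{ki}a_i}{b_i}-\beta_k ,
\]
so $\bu$ is feasible for \eqref{prob:PO-MNL-Sub}. The objective carries $-s_i/b_i$ with $b_i>0$, and $s_i\ge u_i\ln u_i$, so the ECP objective is at most $G(\bu,\theta)$. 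Hence the two optimal values coincide, and optimal solutions correspond via the projection $(\bu,\bs,\bv)\mapsto\bu$ and the lifting of Step 2.

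The only delicate point, and the step I expect to need the most care, is the sign argument in Step 3. The relaxation $v_i\le\ln u_i$ is valid only because $\alpha_{ki}\ge0$ and $b_i>0$, which is exactly why the paper assumes nonnegative coefficients in \eqref{eq:p-constraints}. A secondary point is confirming that the boundary piece $\{(x,0,z): x\le 0,\ z\ge 0\}$ of the cone never becomes active. This follows from the strictly positive lower bound on $u_i$ and from the fixed coordinate equal to $1$.
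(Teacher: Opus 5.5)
Your proof is correct and follows the same route as the paper. Both unpack the two cones as $s_i\ge u_i\ln u_i$ and $v_i\le \ln u_i$, then argue these relaxations are tight: the coefficient $-1/b_i<0$ handles $s_i$, and enlarging $v_i$ only loosens the capacity constraints. Your lift/project comparison of optimal values is a slightly cleaner version of the paper's ``at optimality $s_i=u_i\ln u_i$, $v_i=\ln u_i$'' argument, and it states explicitly the use of $\alpha_{ki}\ge 0$ that the paper leaves implicit (minor slip: in $(v_i,1,u_i)$ the entry $1$ is the middle coordinate $y$, not the first).
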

The exponential cone reformulation in Proposition \ref{prop:MNL-ECP} offers both theoretical and practical advantages. From a computational standpoint, an ECP can be solved efficiently using modern interior-point (IP) solvers such as MOSEK \citep{mosek2023}, which are specifically designed to handle exponential cones and have demonstrated strong numerical performance in large-scale applications. From a theoretical perspective, ECPs belong to the class of convex conic optimization problems and therefore admit polynomial-time solution methods. In particular, IP algorithms solve ECPs to $\varepsilon$-accuracy in time polynomial in the problem size and $\log(1/\varepsilon)$ \citep{nesterov1994,andersen2000}. As a result, each feasibility check in our bisection-based algorithm can be carried out efficiently with provable complexity guarantees, which is essential for establishing the overall PTAS.

\begin{remark}[Quasiconcavity under variable transformation]
It is well known that the revenue function of the MNL-based pricing problem is neither concave nor quasiconcave in prices \citep{hanson1996_ms}. However, we can show that, under the above variable transformation, the objective admits much stronger structural properties. In particular, under the transformation
$u_i = \exp(a_i - b_i p_i), ~\forall i \in [m],$
the revenue function becomes strictly quasiconcave in the transformed variables.
\end{remark}
We formalize this observation in the following proposition.

\begin{proposition}
\label{prop:MNL-quasiconcavity}
Let the transformed revenue function $R(\bu)$ be defined over the feasible domain $\mathcal{U}$ as
$R(\bu)
= \frac{\sum_{i=1}^m \frac{1}{b_i} u_i \bigl(a_i - \ln u_i\bigr)}{1 + \sum_{j=1}^m u_j}.$
Then $R(\bu)$ is strictly quasiconcave on $\mathcal{U}$. Consequently, the optimization problem admits a unique global maximizer, and any local maximizer of $R(\bu)$ over a convex feasible set is globally optimal.
\end{proposition}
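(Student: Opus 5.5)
We prove strict quasiconcavity of $R(\bu)=N(\bu)/D(\bu)$ on $\mathcal{U}$ by studying its superlevel sets. Here
\[
N(\bu)=\sum_i \tfrac{1}{b_i}u_i(a_i-\ln u_i), \qquad D(\bu)=1+\sum_j u_j>0 .
\]
The numerator $N$ is strictly concave on $\bu>0$, since its Hessian is $\mathrm{diag}(-1/(b_iu_i))\prec 0$. The denominator $D$ is affine and positive. The domain $\mathcal{U}$ is convex: it is the image of $\cP$ under the change of variables, described by box bounds and the concave constraints $\sum_i \frac{\alpha_{ki}}{b_i}\ln u_i\ge \mathrm{const}$, so it is exactly the feasible set of \eqref{prob:PO-MNL-Sub}.

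\textbf{Step 1 (quasiconcavity).} For any $\theta\in\mathbb{R}$,
\[
\{\bu\in\mathcal{U}: R(\bu)\ge\theta\}=\{\bu\in\mathcal{U}: N(\bu)-\theta D(\bu)\ge 0\},
\]
because $D>0$. The function $N-\theta D$ is concave, being a concave function plus an affine one, so every superlevel set is convex. This is the same structure used in the subproblem $G(\bu,\theta)$.

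\textbf{Step 2 (strictness).} Take $\bu\neq\bu'$ in $\mathcal{U}$ and $\lambda\in(0,1)$. Let $\theta=\min\{R(\bu),R(\bu')\}$ and $\bu_\lambda=\lambda\bu+(1-\lambda)\bu'$. Write $H=N-\theta D$, so $H(\bu)\ge0$ and $H(\bu')\ge0$. Since $H$ is strictly concave,
\[
H(\bu_\lambda)>\lambda H(\bu)+(1-\lambda)H(\bu')\ge 0 .
\]
Dividing by $D(\bu_\lambda)>0$ gives $R(\bu_\lambda)>\theta$, which is strict quasiconcavity.

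\textbf{Step 3 (consequences).} Suppose $\bu^*$ is a local maximizer over a convex set $C\subseteq\mathcal{U}$, and some $\bu\in C$ has $R(\bu)\ge R(\bu^*)$ with $\bu\neq\bu^*$. Then Step 2 gives $R(\bu_\lambda)>R(\bu^*)$ for all points on the segment, including points arbitrarily close to $\bu^*$. This contradicts local optimality. Hence $\bu^*$ is the unique global maximizer, and in particular any local maximizer is global.

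For existence of a maximizer, $\mathcal{U}$ is compact, since $\cP$ is compact and the map $p_i\mapsto e^{a_i-b_ip_i}$ is continuous, and $R$ is continuous on it. So a global maximizer exists, and by the above it is unique.

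\textbf{Main point to check.} The only delicate point is strictness. It rests on strict concavity of $N$, which holds because each $u_i\ln u_i$ is strictly convex and every coordinate enters $N$ (we have $b_i>0$). A lesser detail is that $\mathcal{U}$ lies in $\bu>0$, which follows from the lower bounds $u_i\ge e^{a_i-b_iU_i}>0$.
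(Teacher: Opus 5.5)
Your proof is correct and follows the same route as the paper. Both arguments write $R=N/D$, use strict concavity of $N$ and positivity of the affine $D$, and identify the superlevel sets $\{R\ge\theta\}$ with $\{N-\theta D\ge 0\}$.

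Your Step 2 applies strict concavity of $N-\theta D$ with $\theta=\min\{R(\bu),R(\bu')\}$. This is a tighter version of the paper's claim that the superlevel sets are ``strictly convex''. That claim needs care, since these sets inherit the flat faces of the box constraints defining $\mathcal{U}$.

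You also supply three things the paper's proof leaves implicit: the convexity of $\mathcal{U}$, the compactness argument for existence of a maximizer, and the argument that a local maximizer is the unique global one.
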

Strict quasiconcavity implies unimodality over convex domains. Consequently, the transformed revenue function $R(\bu)$ admits a unique global maximizer, and any stationary point reached by a gradient-based method is globally optimal. This property suggests that, in principle, first-order methods could be used to solve the transformed problem directly. However, directly optimizing the quasiconcave objective poses several challenges. First, gradient-based methods generally require careful step-size selection and may exhibit slow convergence near the optimum, particularly when the objective is flat or ill-conditioned. Second, enforcing the linear constraints and bound constraints on $\bu$ within a gradient-based framework typically necessitates projection or barrier techniques, which complicate both implementation and convergence analysis. Finally, such approaches do not naturally yield explicit accuracy guarantees in terms of $\varepsilon$-optimality.

In contrast, the bisection-based approach combined with ECP leverages the monotonic structure induced by the fractional reformulation and reduces the problem to a sequence of convex feasibility checks. Each subproblem can be solved efficiently and robustly using IP methods with polynomial-time complexity and logarithmic dependence on the desired accuracy. This approach therefore provides stronger theoretical guarantees, greater numerical stability, and a clear path to establishing a PTAS.

\subsection{Complexity Analysis}
We now analyze the time complexity of solving the constrained pricing problem under the MNL model using the proposed bisection and ECP approach. The result is summarized in the following proposition.
\begin{proposition}
\label{prop:MNL-complexity}
Assume that the number of linear constraints satisfies $K = O(m)$.
For any desired accuracy $\varepsilon>0$, the bisection scheme combined with ECP returns an approximate optimal parameter $\hat{\theta}$ such that
$|\hat{\theta}-\theta^\star|\le \varepsilon,$
where $\theta^\star$ denotes the optimal value. Moreover, when each ECP subproblem is solved using a primal--dual IP method, the total arithmetic complexity of the algorithm is
${O}\!\left(m^{3.5}\,\log^2\!\left(\tfrac{1}{\varepsilon}\right)\right).$
\end{proposition}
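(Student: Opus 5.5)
We will prove Proposition~\ref{prop:MNL-complexity} in two parts: correctness of the bisection, then the cost of each ECP solve.

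\textbf{Correctness.} First fix an initial interval $[\theta_{\min},\theta_{\max}]$ whose length $\Delta_0$ is bounded polynomially in the input data. We take $\theta_{\min}=0$ (revenue is nonnegative whenever $L_i\ge 0$) and $\theta_{\max}=\max_i U_i$, since $R^{\text{MNL}}(\bp)\le \max_i p_i$. Next we establish the invariant $\phi(\theta_{\min})\ge 0>\phi(\theta_{\max})$, where $\phi(\theta)=\max_{\bp\in\cP}G(\bp,\theta)$. The Dinkelbach identity gives $\phi(\theta)\ge 0$ iff $\theta\le \theta^\star$. Indeed, $G(\bp,\theta)=D(\bp)\bigl(R^{\text{MNL}}(\bp)-\theta\bigr)$ with $D(\bp)=1+\sum_j e^{a_j-b_jp_j}\ge 1$. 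Since $\cP$ is compact and $G$ is continuous, the maximum is attained. Hence $\phi(\theta)\ge0$ iff some $\bp\in\cP$ has $R^{\text{MNL}}(\bp)\ge\theta$, i.e.\ iff $\theta\le\theta^\star$. This also gives strict monotonicity of $\phi$, because $\partial_\theta G=-D(\bp)\le -1$. By the invariant, $\theta^\star\in[\theta_{\min},\theta_{\max}]$ at every iteration. After $N=\lceil\log_2(\Delta_0/\varepsilon)\rceil$ halvings the interval has length at most $\varepsilon$, so $\hat\theta=\theta_{\min}$ satisfies $|\hat\theta-\theta^\star|\le\varepsilon$. A maximizer $\bp$ of the last feasible subproblem attains $R^{\text{MNL}}(\bp)\ge\hat\theta$.

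\textbf{Inexact subproblem solves.} This is the step we expect to be the main obstacle. The subproblems are solved only to accuracy $\delta$, so a sign test on $\phi(\theta)$ can err when $|\phi(\theta)|\le\delta$. We would choose $\delta=\varepsilon/2$ and use $|\phi(\theta)-\phi(\theta')|\ge|\theta-\theta'|$, which follows from $D\ge1$. With these, a wrong decision can only occur when $|\theta-\theta^\star|\le\delta$. In that case either endpoint update still leaves an interval within $O(\varepsilon)$ of $\theta^\star$, so the final error is $O(\varepsilon)$. Rescaling $\varepsilon$ by a constant then gives the stated guarantee. We would also check that the ECP of Proposition~\ref{prop:MNL-ECP} is equivalent to \eqref{prob:PO-MNL-Sub}, so solving it returns $\phi(\theta)$ to within $\delta$.

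\textbf{Per-iteration cost and total.} By Proposition~\ref{prop:MNL-ECP}, each subproblem is an ECP with $3m$ variables, $2m$ three-dimensional exponential cones, $2m$ bound constraints and $K=O(m)$ linear constraints. The total barrier parameter is therefore $\nu=O(m+K)=O(m)$. A primal--dual interior-point method with a self-concordant barrier needs $O(\sqrt{\nu}\log(\nu/\delta))=O(\sqrt m\log(1/\varepsilon))$ Newton iterations, treating $\log m$ and the data-size constants as absorbed. Each Newton step solves a dense linear system of size $O(m)$ in $O(m^3)$ arithmetic operations. One ECP solve thus costs $O(m^{3.5}\log(1/\varepsilon))$. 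Multiplying by the $N=O(\log(1/\varepsilon))$ bisection steps gives the total $O\bigl(m^{3.5}\log^2(1/\varepsilon)\bigr)$.
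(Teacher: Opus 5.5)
Your proposal is correct and follows essentially the same route as the paper's proof. A uniform slope bound $|\phi(\theta)-\phi(\theta')|\ge|\theta-\theta'|$ makes bisection robust to $\Theta(\varepsilon)$-accurate ECP solves, and $O(\log(1/\varepsilon))$ bisection steps times $O(\sqrt{m}\log(1/\varepsilon))$ Newton iterations at $O(m^3)$ each gives $O(m^{3.5}\log^2(1/\varepsilon))$; the only differences are that you obtain the slope bound directly from $G=D\,(R^{\text{MNL}}-\theta)$ with $D\ge 1$ rather than through the envelope theorem (which avoids needing differentiability of $\phi$), and you fix an explicit initial interval $[0,\max_i U_i]$, which uses an assumption $L_i\ge 0$ that the paper leaves implicit.
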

The proof of Proposition~\ref{prop:MNL-complexity} (provided in Appendix~\ref{apd:proofs}) proceeds by exploiting the monotone structure induced by the fractional reformulation. The key observation is that the value function $\phi(\theta) = \max_{\bp\in\mathcal{P}} G(\bp,\theta)$ is strictly decreasing with a uniformly bounded slope, which ensures both the uniqueness of the optimal parameter and the robustness of the bisection procedure to inexact subproblem solutions. This allows us to decouple accuracy requirements across the outer bisection loop and the inner ECP, showing that it suffices to solve each subproblem to $\Theta(\varepsilon)$ accuracy. The total complexity then follows by combining the logarithmic convergence rate of bisection with the polynomial-time complexity of IP methods for ECP.

From an algorithmic perspective, Proposition~\ref{prop:MNL-complexity} establishes that the proposed method constitutes a PTAS for the constrained pricing problem under the MNL model. For any desired accuracy $\varepsilon>0$, the algorithm computes an $\varepsilon$-optimal solution in time polynomial in the problem size and logarithmic in $1/\varepsilon$. This result is particularly significant given the inherent nonconcavity of the pricing objective in prices, and it provides a strong theoretical guarantee that constrained pricing under MNL demand can be solved efficiently and reliably using convex optimization techniques.

\section{Constrained Pricing under FMNL Models} \label{sec:pricing-FMNL}
In this section, we extend our analysis to constrained pricing under FMNL demand models, which capture heterogeneous customer preferences through a finite number of segments. Unlike the MNL case, the resulting revenue function exhibits a sum-of-ratios structure and is highly nonconvex, leading to multiple local optima even in simple settings (as shown in Figure \ref{fig:FMNL_example}). We show how to exploit the structure of FMNL demand to reformulate the problem in a manner that isolates the nonconvexity, laying the groundwork for a globally reliable approximation algorithm.
% We first write  the constrained pricing problem under FMNL model with T customer segments:
% \begin{align*}
%      \max_{\bp} \quad & R^{\text{FMNL}}(\bp) = \sum_{t=1}^T d_t \frac{ \sum_{i\in[m]} p_i\exp(a_{ti}-b_{i}p_i)}{1+\sum_{j\in[m]} \exp(a_{tj}-b_{j}p_j)} \\
%      \text{s.t. } \quad 
%      & \sum_{i=1}^m \alpha_{ki} p_i \leq \beta_k, \quad k\in [K], \\
%      & L_i \leq p_i \leq U_i, \quad \forall i \in [m].
% \end{align*}
Our general idea is to reformulate the problem as a \emph{bilinear-convex program}—a program 
that contains some bilinear terms together with convex constraints, where the number of bilinear 
terms is proportional to the number of customer segments. As a consequence, a B\&B approach that employs McCormick relaxations, combined with a convex solver, can be 
applied to solve the problem. Since the number of bilinear terms is only $\mathcal{O}(T)$, the 
overall complexity is exponential in $T$ while remaining polynomial with respect to the other 
problem parameters.

\subsection{Bilinear-convex Reformulation}
Our first step is to reformulate the constrained pricing problem under the FMNL model. As in the MNL case, we leverage a change of variables to expose useful convex structure and isolate the nonconvex components of the problem. In particular, recall that under FMNL demand, the expected revenue is a weighted sum of segment-level revenues, each of which takes a fractional form. To facilitate reformulation, we introduce the following auxiliary variables:
\[
y_t = \sum_{i\in[m]} p_i \exp(a_{ti}-b_i p_i),\quad z_t = 1 + \sum_{j\in[m]} \exp(a_{tj}-b_j p_j), \quad \forall t\in[T],
\]
\[
u_i = \exp(-b_i p_i), \quad \forall i\in[m].
\]
Here, $y_t$ and $z_t$ represent the numerator and denominator of the revenue contribution from customer segment $t$, respectively, while the variables $u_i$ correspond to the same exponential transformation used in the MNL case and allow prices to be recovered via $p_i = -\ln u_i / b_i$. Using these variables, the constrained FMNL pricing problem in \eqref{prob:Price-MXL} can be written equivalently as:
\begin{align}
    \max_{\bu,\by,\bz}\quad & 
    \sum_{t=1}^T d_t \frac{y_t}{z_t} \label{prob:MXL-reform-u}\\
    \text{s.t.}\quad 
    & y_t \le \sum_{i=1}^m \Bigl(-\tfrac{e^{a_{ti}}}{b_i}\Bigr) u_i \ln u_i
    ;\quad z_t \ge 1 + \sum_{j=1}^m e^{a_{tj}} u_j, 
    \qquad t\in[T], \nonumber\\
    & \sum_{i=1}^m \alpha_{ki} \Bigl(-\tfrac{\ln u_i}{b_i}\Bigr) \le \beta_k, 
    \qquad k\in[K], \nonumber\\
    & L_i \le -\tfrac{\ln u_i}{b_i} \le U_i; 
    \quad u_i > 0, \qquad i\in[m]. \nonumber
\end{align}
To further isolate the nonconvexity, define $\theta_t := y_t / z_t$ for each segment $t$. As in the MNL case, the nonlinear terms involving $u_i \ln u_i$ and $\ln u_i$ can be handled using exponential cone constraints. Introducing auxiliary variables $\bs$ and $\bv$, we obtain the following bilinear exponential-cone program:
\begin{align}
    \max_{\bu,\by,\bz,\bs,\bv,\btheta}\quad & 
    \sum_{t=1}^T d_t \theta_t \label{prob:MXL-bilinear}\\
    \text{s.t.}\quad 
    & y_t = \theta_t z_t;\quad y_t \le \sum_{i=1}^m \Bigl(-\tfrac{e^{a_{ti}}}{b_i}\Bigr) s_i;\quad z_t \ge 1 + \sum_{j=1}^m e^{a_{tj}} u_j, 
    && t\in[T], \nonumber\\
    & \sum_{i=1}^m \tfrac{\alpha_{ki}}{b_i} (-v_i) \le \beta_k, 
    && k\in[K], \nonumber \\
    & -b_i U_i \le v_i \le -b_i L_i;\quad (-s_i,\,u_i,\,1)\in\mathcal{K}_{\exp}; \quad
   (v_i,\,1,\,u_i)\in\mathcal{K}_{\exp}, 
    && i\in[m],  \nonumber
\end{align}
In formulation~\eqref{prob:MXL-bilinear}, all nonlinearities except the bilinear constraints $y_t=\theta_t z_t$ are captured through convex exponential cone constraints. This reformulation therefore isolates the nonconvexity of the FMNL pricing problem into a small number of bilinear terms—one per customer segment—which will be exploited in the design of our global optimization algorithm.

\begin{remark}[Pairwise constraints on prices]
Throughout the paper, we assume that the linear price constraints take the form
$\sum_{i=1}^m \alpha_{ki} p_i \le \beta_k,~ k\in[K],$
with coefficients satisfying $\alpha_{ki}\ge 0$. This nonnegativity assumption is critical to preserve convexity after the variable transformation. Under the change of variables $u_i = e^{-b_i p_i}$, linear constraints in prices translate into constraints that are affine in $\ln u_i$. When $\alpha_{ki}\ge 0$, these constraints define a convex feasible set in the transformed space $\mathcal{U}$. In contrast, allowing coefficients of arbitrary sign generally destroys convexity of the transformed constraints, breaking the convex structure of the reformulation and making it difficult to design algorithms with global approximation guarantees. As a result, relaxing this assumption to fully general linear price constraints would substantially complicate the problem and preclude a PTAS under current techniques.

That said, many practically relevant pricing rules involve \emph{pairwise price comparisons}, such as enforcing that the price of one product does not exceed that of another by more than a fixed margin. Examples include versioning constraints (e.g., premium products priced above basic ones), fairness or regulatory rules, and internal pricing guidelines across product tiers. We now show that such constraints can be incorporated without sacrificing tractability under a mild and commonly adopted assumption.

Specifically, suppose that price sensitivity parameters are identical across products, i.e., $b_i=b$ for all $i\in[m]$. This assumption is standard in discrete choice modeling and widely used in both empirical and theoretical work, as it reflects homogeneous price responsiveness across products while allowing heterogeneity to arise through alternative-specific variables. Under this assumption, we can accommodate pairwise constraints of the form
$p_i \le p_j + r_{ij},~\text{for some } i,j\in[m],$
where $r_{ij}$ are fixed constants. Under the transformation $u_i = e^{-b p_i}$, the price can be recovered as $p_i = -\tfrac{1}{b}\ln u_i$, and the above constraint is equivalent to
 $u_j \le e^{\,b r_{ij}}\, u_i$, which is linear in $(u_i,u_j)$ and therefore preserves convexity of the feasible set in the transformed space. Consequently, such pairwise pricing constraints can be incorporated directly into the ECP without introducing additional nonconvexities or altering the bilinear--convex structure of the FMNL pricing problem. This observation shows that the modeling assumptions adopted here are flexible enough to capture a wide range of practically important pricing rules, while still enabling globally reliable approximation algorithms.
\end{remark}

\subsection{A B\&B Scheme for the Bilinear--Convex Program}

We now describe our B\&B approach. As shown later, the resulting procedure runs in time polynomial in the problem size (e.g., the number of products and constraints) and exponential only in the number of customer segments $T$. The key observation is that the formulation contains a very limited source of non-convexity: the $T$ bilinear equalities $y_t=\theta_t z_t$, one for each customer segment. All remaining components of the model are convex and admit an ECP.

The B\&B scheme exploits this structure by branching exclusively on the bilinear terms. At each node of the search tree, we solve a \emph{convex} exponential-cone relaxation obtained by replacing each bilinear equality $y_t=\theta_t z_t$ with its McCormick envelope \citep{mccormick1976}  over the current bounds of $(\theta_t,z_t)$. As the algorithm explores the tree, these bounds are progressively tightened and propagated, leading to increasingly strong relaxations and enabling effective pruning. This structural decomposition ensures that the exponential complexity is confined to the number of customer segments, while scalability with respect to the number of products is preserved.
\subsubsection{Node Relaxation (ECP \& McCormick Envelope)}\label{sec:node_relaxation}
Let $[\underline{\theta}_t,\overline{\theta}_t]$ and $[\underline{z}_t,\overline{z}_t]$
denote valid bounds on $\theta_t$ and $z_t$, respectively, at a given node of the B\&B tree. The bilinear identity $y_t=\theta_t z_t$ is relaxed using the standard McCormick envelope \citep{mccormick1976}, yielding the following four linear inequalities:
\begin{align}
y_t &\;\ge\; \underline{\theta}_t\, z_t \;+\; \underline{z}_t\, \theta_t \;-\; \underline{\theta}_t\,\underline{z}_t; \quad y_t \;\ge\; \overline{\theta}_t\, z_t \;+\; \overline{z}_t\, \theta_t \;-\; \overline{\theta}_t\,\overline{z}_t, \label{mc1}\\
y_t &\;\le\; \overline{\theta}_t\, z_t \;+\; \underline{z}_t\, \theta_t \;-\; \overline{\theta}_t\,\underline{z}_t;\quad y_t \;\le\; \underline{\theta}_t\, z_t \;+\; \overline{z}_t\, \theta_t \;-\; \underline{\theta}_t\,\overline{z}_t. \label{mc4}
\end{align}
These inequalities define the tightest possible convex relaxation of the bilinear term over the given bounding box. Importantly, the relaxation error depends directly on the widths of the intervals $[\underline{\theta}_t,\overline{\theta}_t]$ and $[\underline{z}_t,\overline{z}_t]$: as these bounds are tightened through branching and bound propagation, the McCormick envelope converges uniformly to the exact bilinear constraint. In the limit where either interval collapses to a singleton, the relaxation becomes exact. At each node, replacing $y_t=\theta_t z_t$ with \eqref{mc1}--\eqref{mc4} yields a \emph{convex} ECP. Solving this relaxation provides an upper bound on the optimal objective value of the original nonconvex problem at the node, which is used to guide branching and pruning in the B\&B procedure.
\subsubsection{Global Variable Bounds and Initialization}
We initialize the bounds for the auxiliary variables using interval arithmetic based on the bounds of the price variables $p_i \in [L_i, U_i]$. First, recall that the auxiliary variable $u_i$ is defined as $u_i = e^{-b_i p_i}$, resulting in the following bounds:
$\underline{u}_i \;=\; e^{-b_i U_i}$ and $
\overline{u}_i \;=\; e^{-b_i L_i},\quad \forall i\in[m].$
The denominator variable $z_t$ is defined as $z_t = 1 + \sum_{i=1}^m e^{a_{ti}} u_i$. Since the coefficients $e^{a_{ti}}$ are positive, $z_t$ is strictly increasing in $u_i$, and its initial bounds are given by
$\underline{z}_t \;=\; 1+\sum_{i=1}^m e^{a_{ti}} \,\underline{u}_i,$  and $
\overline{z}_t \;=\; 1+\sum_{i=1}^m e^{a_{ti}} \,\overline{u}_i,\quad \forall t\in[T].$
To derive bounds for $\theta_t$, we first analyze the numerator term $y_t$. Let $s_i = u_i \ln u_i$. The function $f(u) = u \ln u$ is convex with a global minimum at $u = 1/e$. Therefore, we compute the bounds $[\underline{s}_i, \overline{s}_i]$ by checking the critical point against the domain $[\underline{u}_i, \overline{u}_i]$:
\[
\underline{s}_i = 
\begin{cases} 
-e^{-1} & \text{if } \underline{u}_i \le e^{-1} \le \overline{u}_i \\
\min(\underline{u}_i \ln \underline{u}_i, \; \overline{u}_i \ln \overline{u}_i) & \text{otherwise}
\end{cases},
\qquad
\overline{s}_i = \max(\underline{u}_i \ln \underline{u}_i, \; \overline{u}_i \ln \overline{u}_i).
\]
The numerator $y_t$ is a linear combination of these terms: $y_t = \sum_{i=1}^m \left(-\frac{e^{a_{ti}}}{b_i}\right) s_i$. Since the coefficients $C_{ti} = -e^{a_{ti}}/b_i$ are strictly negative, the lower bound of $y_t$ is determining by the \textit{upper} bound of $s_i$, and vice versa:
$\underline{y}_t = \sum_{i=1}^m C_{ti} \,\overline{s}_i$ and $
\overline{y}_t = \sum_{i=1}^m C_{ti} \,\underline{s}_i.$
Finally, the bounds for $\theta_t$ are obtained via interval division. Since $z_t > 0$ and $y_t$ is negative, the division operations are:
$\underline{\theta}_t \;=\; \frac{\underline{y}_t}{\underline{z}_t}$ and $
\overline{\theta}_t \;=\; \frac{\overline{y}_t}{\overline{z}_t}.$
\subsubsection{Branching, Bounding and Pruning}
Our proposed B\&B framework for the constrained pricing problem is outlined in Algorithm~\ref{alg:bb_compact}. We employ a best-first search (BFS) policy, maintaining a priority queue $\mathcal{Q}$ ordered by the nodes' objective value. To ensure computational efficiency, we utilize a persistent solver architecture: rather than rebuilding the optimization model $\mathcal{M}$ at each iteration, we dynamically update the variable bounds in the existing model and re-solve to obtain the local upper bound $U_N$ and relaxation candidates $(\hat{\bu}, \hat{\by}, \hat{\bz}, \hat{\btheta})$. The algorithm proceeds through three critical phases at each node:
\begin{enumerate}
    \item \textit{Pruning. } Nodes are immediately discarded if the relaxation problem $\mathcal{M}$ is proven infeasible or if the local upper bound $U_N$ is less than or equal to the current best global lower bound $R^*$. This effectively prunes sub-optimal regions of the search space.
    \item \textit{Incumbent update \& optimality check. } If the node is not pruned, we attempt to update the global lower bound $R^*$ and best price vector $\bp^*$ using the candidate price $\hat{\bp}$. We then evaluate the bilinear feasibility by calculating the maximum violation $\delta_{\max} = \max_t |\hat{y}_t - \hat{\theta}_t \hat{z}_t|$. Under the BFS, if this violation is within the tolerance $\varepsilon$, the current node is guaranteed to be the global optimum, and the search terminates immediately.
    \item \textit{Branching. } If the solution remains infeasible w.r.t. the bilinear constraints ($\delta_{\max} > \varepsilon$), we perform spatial branching. We identify the index $t^*$ with the maximum violation and select the variable involved in the bilinear term (either $\theta_{t^*}$ or $z_{t^*}$) that has the largest current interval. The search space is then partitioned by bisecting this interval, and the resulting child nodes are added to $\mathcal{Q}$. 
\end{enumerate}
\begin{algorithm}[htb]
\caption{B\&B for Constrained Pricing Problem under FMNL}
\label{alg:bb_compact}
\begin{algorithmic}[1]
\STATE \textbf{Input:} $\mathcal{Q} \leftarrow \{\text{root}\}$, $R^* \leftarrow -\infty$, $p^* \leftarrow \emptyset$, $\mathcal{M} \leftarrow \{\text{global bounds of } u, z, \theta \text{ and static constraints}\}$
\WHILE{$\mathcal{Q} \neq \emptyset$ \AND time limit not reached}
    \STATE Node $N \leftarrow \text{pop}(\mathcal{Q})$ \hfill \textcolor{blue}{\# Select the best node}
    \STATE Update parameters' bounds in $\mathcal{M}$ using at Node $N$ \hfill \textcolor{blue}{\# Persistent solver update}
    \STATE Solve $\mathcal{M}$ (using ECP solver, i.e., Mosek) to obtain upper bound $U_N$ and candidate $(\hat{u}, \hat{y}, \hat{z}, \hat{\theta})$
    
    \STATE \textbf{if} $\mathcal{M}$ infeasible \textbf{or} $U_N \le R^*$ \textbf{then continue} \hfill \textcolor{blue}{\# Prune by infeasibility or bound}
    
    \STATE Update $R^*, p^*$ using $\hat{p}=-\frac{\ln(\hat{u})}{b}$ if $R(\hat{p}) > R^*$ \hfill \textcolor{blue}{\# Update incumbent}
    \STATE Let $\delta_{\max} \leftarrow \max_t |\hat{y}_t - \hat{\theta}_t \hat{z}_t|$ \hfill \textcolor{blue}{\# Calculate max bilinear violation}
    
    \IF{$\delta_{\max} \le \varepsilon$}
        \STATE \textbf{break} \hfill \textcolor{blue}{\# Global optimality reached}
    \ENDIF

    \STATE $t^* \leftarrow \arg\max \delta_t$
    \STATE Select branch variable $v \in \{\theta_{t^*}, z_{t^*}\}$ with largest interval
    \STATE Branch on $v$: bisect interval, add children to $\mathcal{Q}$ \hfill \textcolor{blue}{\# Spatial branching}
\ENDWHILE
\STATE \textbf{Output:} $p^*, R^*$
\end{algorithmic}
\end{algorithm}
Our strategy of branching solely on the maximum violation is driven by three key factors: priority, efficiency, and completeness. First, regarding priority, the term with the largest violation $\delta_{\max}$ acts as the primary driver of the relaxation's overestimation. Targeting this error yields the greatest reduction in the local upper bound $U_N$, thereby maximizing the chance of pruning the node against the global lower bound $R^*$. Second, this approach ensures efficiency by maintaining a binary tree structure; attempting to branch on multiple violations simultaneously would lead to a combinatorial explosion of child nodes. Finally, the method guarantees completeness. Secondary violations are not ignored but deferred; if they remain significant after the primary violation is resolved, they will naturally surface as the maximum violation in descendant nodes, ensuring all non-convexities are eventually corrected.
Further details of the B\&B algorithm are provided in Appendix~\ref{apd:detailed-BnB}.

\subsection{Complexity Discussion}
We now discuss the computational complexity of the proposed B\&B procedure. The overall complexity consists of two components: the cost of solving the convex ECP at each B\&B node, and an upper bound on the number of nodes generated in the search tree.

At each node, the algorithm solves a convex ECP whose barrier parameter satisfies $\nu=\Theta(m)$, corresponding to two exponential cone constraints per product and additional linear constraints. A primal--dual IP method requires $\mathcal{O}(\sqrt{m}\log(1/\delta))$ Newton iterations to achieve accuracy $\delta$, and each iteration involves solving a linear system of dimension $n=\Theta(m+K)$, with worst-case arithmetic cost ${\mathcal{O}}(n^3)$ \citep{nesterov1994}. Thus, the computational cost per node is polynomial in the problem dimensions.

On the other hand, the number of B\&B nodes depends on the number of bilinear terms in the formulation. Since the only source of nonconvexity consists of the $T$ bilinear constraints $y_t=\theta_t z_t$, the size of the B\&B tree grows exponentially in $T$ in the worst case. Importantly, this exponential dependence is independent of the number of products. In practice, strong relaxations, effective branching on the most violated bilinear terms, and warm-starting IP solvers substantially reduce the number of explored nodes. We summarize the worst-case complexity guarantee in the following theorem.
\begin{theorem}[Complexity of B\&B for the constrained pricing under the FMNL model]
\label{prop:bnb_complexity}
For any optimality tolerance $\varepsilon \in (0,1)$, the spatial B\&B algorithm terminates with an $\varepsilon$-optimal solution. For a fixed number of customer segments $T$, the total time complexity is bounded by
\[
\underbrace{\mathcal{O}\!\left(\left(\tfrac{1}{\varepsilon}\right)^T\right)}_{\text{number of nodes}}
\;\times\;
\underbrace{\mathcal{O}\!\left(n^3 \sqrt{m}\log\!\left(\tfrac{1}{\varepsilon}\right)\right)}_{\text{ECP solver cost}},
\]
where $n=\Theta(m+K)$ denotes the problem dimension at each node. In particular, if the number of linear constraints satisfies $K=\mathcal{O}(m)$ and $T$ is bounded, the overall complexity simplifies to
$\mathcal{O}\!\left(\left(\tfrac{1}{\varepsilon}\right)^T m^{3.5}\log\!\left(\tfrac{1}{\varepsilon}\right)\right).$
Consequently, when the number of customer segments $T$ is bounded, the proposed algorithm runs in time polynomial in the problem size and the inverse accuracy $1/\varepsilon$.
\end{theorem}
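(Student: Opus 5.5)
The plan is to separate the argument into three parts: (i) the McCormick approximation error on a box, (ii) a count of the boxes, and (iii) the cost of one node. All three rest on the fact that the initial bounds on $\theta_t$ and $z_t$ from the initialization step are finite and depend only on the problem data. Write $W_\theta=\max_t(\overline{\theta}_t-\underline{\theta}_t)$ and $W_z=\max_t(\overline{z}_t-\underline{z}_t)$ for the root widths. Both are constants independent of $\varepsilon$, because $u_i\in[e^{-b_iU_i},e^{-b_iL_i}]$ and the $s_i$, $y_t$, $z_t$ bounds follow by interval arithmetic.

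\textbf{Step 1 (error on a box).} Take a box in which $\theta_t\in[\underline{\theta}_t,\overline{\theta}_t]$ and $z_t\in[\underline{z}_t,\overline{z}_t]$. Any point satisfying \eqref{mc1}--\eqref{mc4} obeys the standard McCormick gap bound
\[
|y_t-\theta_t z_t|\le \tfrac14(\overline{\theta}_t-\underline{\theta}_t)(\overline{z}_t-\underline{z}_t).
\]
Convert this violation into an objective error. Since $z_t\ge \underline{z}_t\ge 1$, the relaxed solution satisfies $|\hat y_t/\hat z_t-\hat\theta_t|\le |\hat y_t-\hat\theta_t\hat z_t|$. The $\hat u$ part of the relaxed solution is feasible for the original problem, because all constraints on $u$, $s$, $v$ are exact convex constraints. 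Therefore $\hat p=-\ln\hat u/b$ is feasible, and its true revenue is at least $\sum_t d_t\hat y_t/\hat z_t$, using $y_t\le$ the true numerator and $z_t\ge$ the true denominator. I need to handle monotonicity carefully here. The signs of $y_t$ may require bounding $y/z_{\text{true}}$ versus $y/z$, and I will use the Lipschitz constants of $y/z$ on the compact box.

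Combining these gives
\[
R(\hat p)\ge U_N-\max_t\delta_t \ge U_N-C\,w_\theta w_z,
\]
where $C$ depends only on the data. So once every segment's box has width product at most $\varepsilon/C$, the termination test $\delta_{\max}\le\varepsilon$ fires. Under best-first search, $U_N$ is also a global upper bound, and this yields $\varepsilon$-optimality.

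\textbf{Step 2 (node count).} Branching always bisects the longer of the two intervals for the most violated $t$. So an interval is split only while its violation exceeds $\varepsilon$, which requires $w_\theta w_z>4\varepsilon$. After about $\log_2(W_\theta W_z/\varepsilon)$ bisections of segment $t$'s pair, that pair can no longer be the argmax of a violation exceeding $\varepsilon$. Hence every leaf corresponds to a product of $T$ two-dimensional grids, each with $O(W_\theta W_z/\varepsilon)$ cells. The number of leaves is therefore $O((1/\varepsilon)^T)$, with constants absorbed, and a binary tree has at most twice as many nodes as leaves. This is the step I expect to be the main obstacle. The difficulty is making rigorous that the deferred secondary violations do not cause extra splits beyond the grid resolution; I would argue it by a depth bound per segment as above.

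\textbf{Step 3 (per-node cost).} Each node is an ECP with $2m$ exponential cones, $O(m+K)$ linear constraints and $4T$ McCormick inequalities. Its barrier parameter is $\nu=\Theta(m)$, so a primal--dual IP method needs $O(\sqrt m\log(1/\varepsilon))$ Newton steps, each costing $O(n^3)$ with $n=\Theta(m+K)$. Solving to accuracy $\Theta(\varepsilon)$ only perturbs the Step 1 bound by a constant factor, by the same robustness argument used in Proposition~\ref{prop:MNL-complexity}.

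Multiplying the node count from Step 2 by the per-node cost from Step 3 gives the stated bound. Setting $K=O(m)$ then gives $O((1/\varepsilon)^T m^{3.5}\log(1/\varepsilon))$.
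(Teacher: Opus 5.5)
Your proposal is correct and takes essentially the same route as the paper's proof. Both use the McCormick gap $\frac14\Delta\theta_t\Delta z_t$, a per-segment bisection depth of $O(\log(1/\varepsilon))$ whose product over the $T$ segments gives $O((1/\varepsilon)^T)$ nodes, and a per-node interior-point cost of $O(\sqrt{m}\log(1/\varepsilon))$ Newton steps at $O(n^3)$ each. Your depth bound along root-to-leaf paths and your inequality $R(\hat{\bp})\ge U_N-\delta_{\max}$ are in fact more careful than the paper's grid-counting argument.

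The sign worry you flag in Step 1 does not need Lipschitz constants when prices are nonnegative ($L_i\ge 0$). In that case $\hat u_i\le 1$, so the true numerator at $\hat{\bu}$ is nonnegative and dominates $\hat y_t$. Together with $\hat z_t\ge 1+\sum_j e^{a_{tj}}\hat u_j$, this gives that the segment-$t$ revenue at $\hat{\bp}$ is at least $\hat y_t/\hat z_t$ whatever the sign of $\hat y_t$, and $\sum_t d_t=1$ finishes the bound.
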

Theorem~\ref{prop:bnb_complexity} establishes that the proposed B\&B algorithm computes a globally $\varepsilon$-optimal solution for the constrained pricing problem under FMNL demand. Importantly, the exponential dependence in the worst-case complexity is confined solely to the number of customer segments $T$, while the dependence on the number of products and constraints remains polynomial. This distinction is critical: in practical applications, customer heterogeneity is typically captured using a small number of latent segments, both for interpretability and for statistical reliability in demand estimation. As a result, $T$ is usually modest, even in large-scale pricing problems with extensive assortments. Under this commonly encountered regime, the proposed algorithm scales efficiently with the number of products and constraints, making it well suited for realistic pricing applications. Moreover, the explicit dependence on the inverse accuracy $1/\varepsilon$ provides a transparent trade-off between solution precision and computational effort, allowing practitioners to adjust accuracy levels according to operational requirements.

\begin{remark}[Practical complexity]
While Proposition~\ref{prop:bnb_complexity} establishes a worst-case complexity bound of 
$\mathcal{O}\!\left((1/\varepsilon)^T\, m^{3.5}\log(1/\varepsilon)\right)$
-- corresponding to a scenario where the B\&B algorithm essentially resorts to a full grid search--this estimate is highly pessimistic. In practice, the bounding mechanism significantly prunes the search space. If the convex relaxation at a node provides a tight bound, or if the current incumbent solution is strong, sub-optimal branches are ``fathomed'' (discarded) early in the tree traversal. In favorable scenarios where one child node is consistently pruned at each branching step, the algorithm avoids enumerating all leaf nodes, and the search effectively follows a limited set of paths to the optimum. Under such conditions, the number of explored nodes scales proportionally to the tree depth rather than the tree size, yielding an effective complexity closer to $O((\log(1/\epsilon))^T)$. This behavior aligns with our experimental results, where the algorithm demonstrates stable scaling even with small precision thresholds $\epsilon$. 
\end{remark}

\begin{remark}[Comparison to the bisection--ECP scheme for MNL-based pricing]
The B\&B framework developed for the FMNL setting can, in principle, also be applied to the MNL-based pricing problem discussed in Section~3. In this case, the formulation contains a single bilinear term ($T=1$), and the resulting B\&B procedure yields a worst-case time complexity of
$\mathcal{O}\!\left((1/\varepsilon)\, m^{3.5}\log(1/\varepsilon)\right).$
While this complexity is polynomial, its dependence on the inverse accuracy $1/\varepsilon$ is linear. In contrast, the bisection--ECP scheme developed specifically for the MNL case exploits the unimodal structure of the objective and achieves a strictly faster convergence rate. Its overall complexity is
$\mathcal{O}\!\left(m^{3.5}\log^2(1/\varepsilon)\right),$
which depends only logarithmically on the target accuracy. This difference highlights the advantage of leveraging problem-specific structure: although B\&B provides a unified framework applicable to both MNL and FMNL models, it does not fully exploit the special properties of the single-segment case. As a result, the bisection--ECP approach is provably more efficient for MNL-based pricing and is therefore preferable whenever the unimodal structure can be used.
\end{remark}
\begin{remark}[Comparison to the B\&B complexity in \citep{ruben2022mnsc}]
\cite{ruben2022mnsc} develop a B\&B algorithm for the \emph{unconstrained} pricing problem under FMNL demand and establish a worst-case time complexity of
$\mathcal{O}\!\left((1/\varepsilon)^T\, m^{5.5+3T}\right)$.
In contrast, Theorem~\ref{prop:bnb_complexity} shows that the complexity of our B\&B algorithm for the \emph{constrained} pricing problem scales as
$\mathcal{O}\!\left((1/\varepsilon)^T\, m^{3.5}\log(1/\varepsilon)\right)$ when $K=\mathcal{O}(m)$.

Two key differences are worth highlighting. First, in our formulation the exponential dependence is confined solely to the number of customer segments $T$, while the polynomial dependence on the number of products is independent of $T$. By contrast, the complexity bound in \citep{ruben2022mnsc} grows polynomially in $m$ with an exponent that itself increases with $T$. Second, despite addressing the more general setting with linear price constraints, our approach achieves a \textit{strictly lower polynomial dependence} on the problem dimension. This improvement stems from the ECP, which isolates all nonconvexities into $T$ bilinear terms and enables tighter convex relaxations within the B\&B framework.
\end{remark}
\section{Experimental Results}\label{sec:experiments}
In this section, we first describe the instance generation procedure, the benchmark methods considered—including the B\&B algorithm for unconstrained FMNL pricing from \cite{ruben2022mnsc} and a gradient-based local search heuristic—and the computational environment. We then evaluate the performance of our proposed methods relative to these benchmarks under unconstrained pricing, capacity-constrained pricing, and joint settings with pairwise price and capacity constraints.
\subsection{Data Generation \& Experimental Setting}
\paragraph{Instance Generation. }The unconstrained instances are generated following the procedure in \cite{ruben2022mnsc}. The number of products $m$ varies from 10 to 100, and the number of customer segments $T$ is chosen from $\{1,2,3,4\}$. The customer choice probability $a$ and the price sensitivity $b$ are uniformly generated from $[-7, 7]$ and $[0.001,0.01]$, respectively. For the constrained pricing dataset, the upper bound $U_i$ and the lower bound $L_i$ of the price variable $p_i~(i \in [m])$ are randomly generated in $[0.8UB_i,UB_i]$ and $[\min\{LB_i,0.1U_i\},0.3U_i]$, where: $UB_i$ and $LB_i$ are the theoretical upper bound and lower bound of $p_i$ calculated based on $a$ and $b$ (see \cite{ruben2022mnsc} for details). There are 5 capacity constraints $(\sum_{i \in [m]}\alpha_{ki} p_i \leq \beta_k)$ examined in each instance in which $\alpha_{ki} \in [0,1]$ such that $\sum_{i \in [m] \alpha_{ki}} = 1$ and $
\beta_k \in [\sum_{i \in [m]}\alpha_{ki}L_i, 0.5\bigl(\sum_{i \in [m]}\alpha_{ki}U_i\bigr)]$. To ensure the feasibility, the pairwise-price parameters $r_{ij}~(i,j\in[m],i\neq j)$ are generated as $r_{ij} = \gamma U_i$, where $\gamma$ is randomly chosen in $[0.2,0.5]$.

\paragraph{Baselines. }For unconstrained pricing, we compare our methods against the B\&B approach proposed by \cite{ruben2022mnsc}, denoted by ``Ucon-B\&B'', which is currently the state-of-the-art approach for unconstrained  price optimization under MNL and FMNL models. For constrained problems lacking exact benchmarks, we utilize a gradient-based baseline. In selecting this baseline, we evaluated two distinct optimization strategies within the \texttt{SciPy} library \citep{SciPy2020}: \texttt{SLSQP} and \texttt{trust-constr}. \texttt{SLSQP} is a sequential least squares programming algorithm that solves a series of quadratic subproblems to efficiently handle local linear approximations of constraints. In contrast, \texttt{trust-constr} employs a trust-region framework, which defines a region around the current iterate where a simplified model is trusted to be accurate, adjusting the step size dynamically to ensure convergence. Our preliminary tests revealed that \texttt{SLSQP} offers superior solution quality and runtime efficiency for our specific problem structure. Therefore, all subsequent gradient-based results refer to the \texttt{SLSQP} implementation.

\paragraph{Computational Environment. }All the experiments are implemented using Python and conducted on a PC equipped with an Intel(R) Core(TM) i7-1255U CPU @ 1.70GHz and 16 GB of RAM and the Windows 10 operating system. The exponential-cone formulations are carried out by \texttt{MOSEK} Optimization Tools version 11 with default settings, and the \texttt{SLSQP} is in \texttt{SciPy} library version 1.17.0. The optimal tolerance $\epsilon$ of all methods is set to 0.01, as in \cite{ruben2022mnsc}. The CPU time limit for each constrained pricing instance is set to 3600 seconds, i.e., the algorithms/solver are forced to stop if they exceed the time budget and report the best-found solutions.
\subsection{Constrained Pricing under FMNL: B\&B vs.\ Local Search Heuristics}
We report numerical comparisons for constrained pricing under the FMNL model; experimental results for pricing under the MNL model are deferred to Appendix~\ref{apd:experiment-MNL}. In this subsection, we focus on instances that include both capacity and pairwise price constraints, referred to as the \emph{C\&P-FMNL} setting. This setting is particularly challenging, as it combines demand heterogeneity with practical pricing constraints that couple prices across products and limit feasible price levels. Additional numerical results—including comparisons with state-of-the-art B\&B methods for unconstrained FMNL pricing (U-FMNL) \citep{ruben2022mnsc} and experiments with capacity-only constraints (C-FMNL)—are reported in Appendix~\ref{apd:FMNL-C-constraints}.

\begin{figure}[htb] \centering \includegraphics[width=\linewidth]{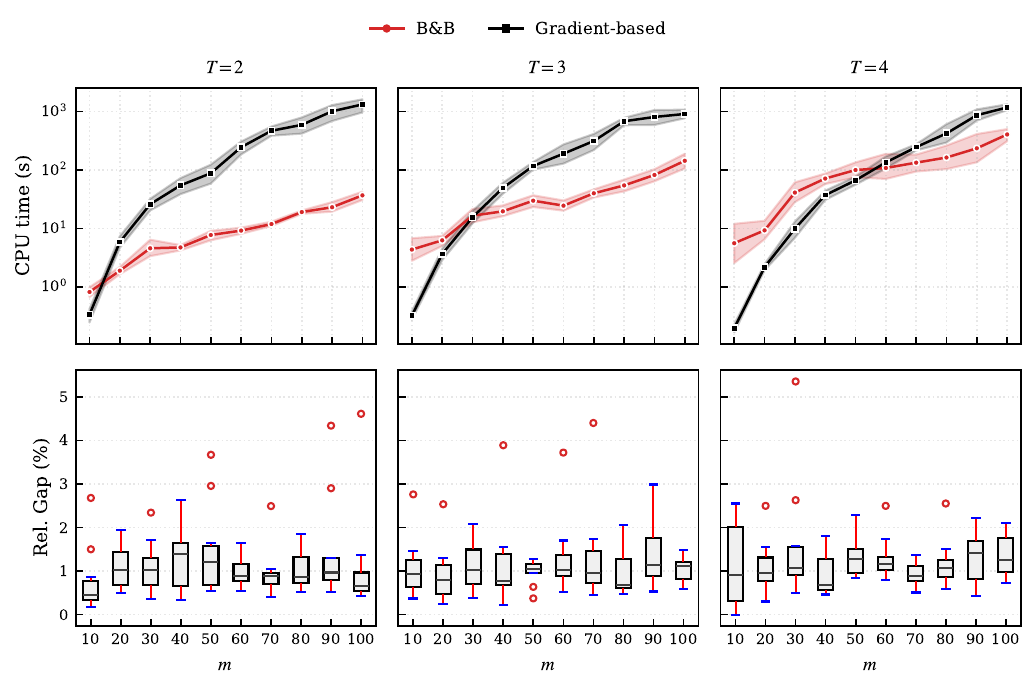} \caption{Our proposed methods vs. the gradient-based algorithm on the C\&P-FMNL dataset.} \label{fig:obj_time_FMNL_Homo} \end{figure}

Figure~\ref{fig:obj_time_FMNL_Homo} visualizes the performances of the proposed B\&B method and the gradient-based local search heuristic on the C\&P-FMNL dataset, with line plots representing a 95\% confidence interval. This setting represents the most complex scenario considered, as the presence of pairwise price constraints substantially increases problem difficulty. As the number of products $m$ grows, these constraints lead to a sharp increase in computational burden, but the two methods exhibit markedly different scalability patterns.

Specifically, the runtime of the gradient-based method grows rapidly with $m$ and appears exponential in practice, exceeding 1,000 seconds when $m$ approaches 100. In addition, despite the increased runtime, the local search heuristic fails to close the optimality gap, which remains as high as 5\% in larger instances. In contrast, the proposed B\&B approach exhibits a much more stable, near-linear increase in runtime as $m$ grows, even in the most challenging case with $T=4$ customer segments. Although the inclusion of capacity and pairwise constraints increases computational effort relative to unconstrained settings, the B\&B method consistently returns globally optimal solutions within reasonable runtimes.

These results highlight an important trade-off between speed and reliability. While gradient-based heuristics may provide quick approximations in simple or lightly constrained environments, their performance deteriorates rapidly as realistic pricing constraints and larger assortments are introduced, leading to significant and persistent revenue loss. By contrast, the proposed B\&B framework remains robust in the presence of practical constraints—such as capacity limits, price bounds, and relative pricing rules—while maintaining predictable computational scaling. This suggests that for high-stakes pricing decisions involving complex constraints, globally reliable optimization methods can deliver substantial value over heuristic approaches, even at moderate additional computational cost.
\subsection{Single-segment vs.\ Multi-segment Pricing}
\begin{figure}[htb]
    \centering
    \includegraphics[width=\linewidth]{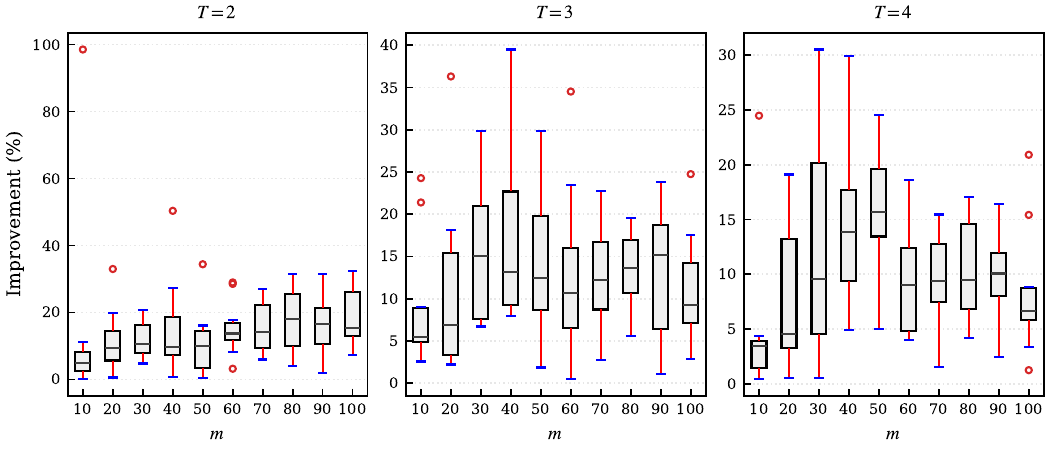}
\caption{Objective loss incurred by approximating multi-segment pricing problems with a single-segment (MNL) model.}
    \label{fig:MNL_FMNL}
\end{figure}
When $T=1$, the pricing objective reduces to a single fractional function that is unimodal in the transformed variables, which guarantees efficient convergence to the global optimum via the bisection-based method. This favorable structure naturally raises the question of whether a multi-segment pricing problem can be approximated by a single-segment model in order to exploit this tractability. To examine this possibility, we consider a common approximation strategy in practice: collapsing multiple customer segments into a single representative segment by aggregating segment-level parameters using their mean values. We then solve the resulting single-segment pricing problem and evaluate its performance against the true FMNL benchmark. Figure~\ref{fig:MNL_FMNL} reports the results of this comparison on the C\&P-FMNL dataset.

The results demonstrate that this approximation performs poorly and fails to capture the structural complexity of the multi-segment pricing problem. Across all product sizes $m$ and customer segment configurations with $T\ge2$, the single-segment surrogate consistently incurs substantial objective loss relative to the true FMNL optimum. In the most extreme cases, the approximation error reaches up to 98.5\%, and even on average the loss remains significant, typically ranging between 4\% and 18\% based on interquartile statistics. Importantly, this performance gap does not diminish as the problem size grows, indicating that the error is structural rather than numerical.

These findings highlight a critical limitation of simplifying heterogeneous demand into a single representative segment. While such approximations may appear attractive due to their computational simplicity, they can lead to systematically distorted pricing decisions when customer segments respond differently to prices and when practical constraints—such as capacity limits and relative pricing rules—are present. In such environments, ignoring demand heterogeneity can result in substantial and persistent revenue losses. The results therefore underscore the importance of explicitly accounting for multiple customer segments in constrained pricing problems, and motivate the need for optimization methods that can handle heterogeneous demand without sacrificing global optimality guarantees.
\subsection{Unconstrained Pricing as a Poor Proxy for Constrained Optima}
\begin{figure}[!h]
    \centering
    \includegraphics[width=\linewidth]{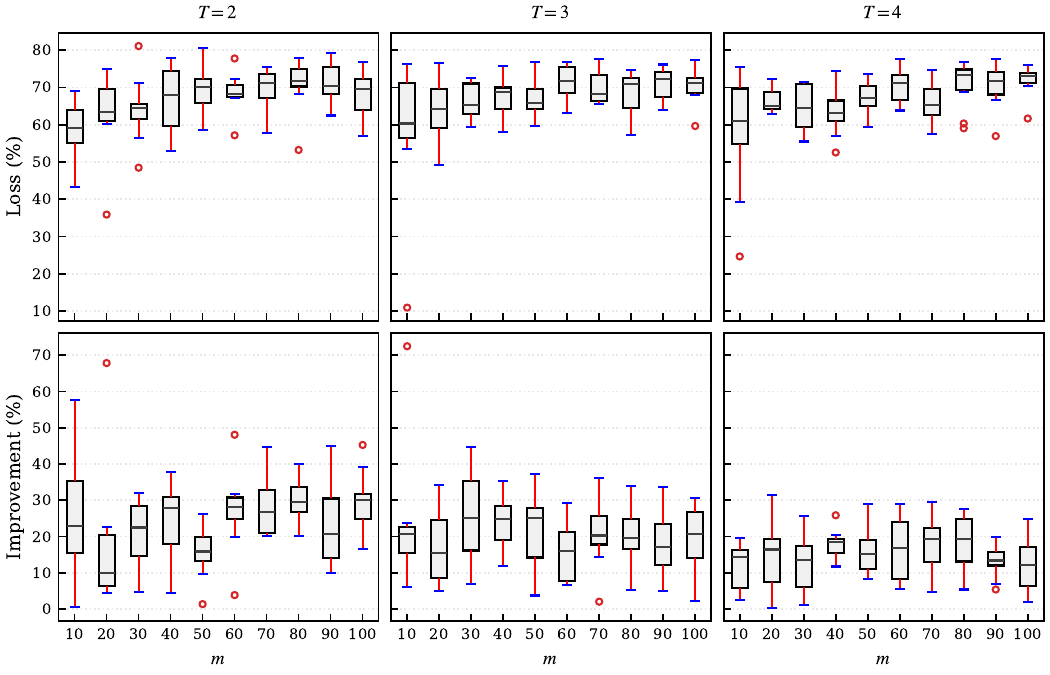}
\caption{Objective gap between projected unconstrained solutions and our B\&B method.}
    \label{fig:unconstr_constr}
\end{figure}
We examine whether unconstrained pricing solutions can serve as reasonable proxies for constrained optima in the presence of practical pricing restrictions. Figure~\ref{fig:unconstr_constr} highlights the substantial performance gap between unconstrained pricing solutions and those obtained by directly solving the constrained pricing problem under the C\&P-FMNL setting. The top row of Figure~\ref{fig:unconstr_constr} reports the objective loss incurred when an unconstrained solution is naively projected onto the feasible region defined by capacity, pairwise, and bound constraints. Specifically, given an unconstrained optimal price vector $\bp_{\text{unctr}}$, we compute a feasible price vector by solving the following projection problem using the \texttt{SLSQP} solver in the \texttt{SciPy} library:
$\min_{\bp \in \mathcal{P}} \ \|\bp - \bp_{\text{unctr}}\|_2^2,$
where $\mathcal{P}$ denotes the feasible set of price vectors $\mathbf{p}$ defined by bound, capacity, and pairwise constraints.

Across all test instances, the projected solutions exhibit severe revenue degradation, with an average objective loss of approximately 67\% and worst-case losses reaching as high as 81\%. These results demonstrate that pricing constraints fundamentally alter the structure of the optimal solution, and that ignoring them during optimization cannot be remedied by post hoc projection. Consequently, unconstrained solutions—even when adjusted to satisfy feasibility—provide poor approximations to the true constrained optimum.

The bottom row of Figure~\ref{fig:unconstr_constr} illustrates the effectiveness of the proposed B\&B approach in addressing this challenge. By directly optimizing over the constrained feasible region, the B\&B algorithm substantially mitigates the revenue loss observed under the projection baseline. Relative to the projected unconstrained solution, the B\&B method achieves average revenue improvements ranging from 15.2\% (for $T=4$) to 24.2\% (for $T=2$), with some instances recovering over 72\% of the lost objective value.

From a managerial perspective, these findings carry a clear implication: constraints such as capacity limits, price bounds, and relative pricing rules are not mere implementation details, but core determinants of optimal pricing decisions. Treating constrained pricing as an unconstrained problem followed by feasibility adjustment can lead to dramatic and systematic revenue losses. In contrast, incorporating constraints directly into the optimization process enables firms to fully capture the value of their pricing flexibility, particularly in settings with heterogeneous customer demand and complex operational restrictions.

\section{Conclusion}\label{sec:concl}
This paper studies constrained price optimization under MNL and FMNL demand models. We develop a unified optimization framework based on exponential-cone reformulations that yields global approximation guarantees despite the nonconvexity of pricing problems under logit-based demand. For the MNL model, we establish a PTAS via a convex reformulation and bisection. For FMNL models with a bounded number of customer segments, we introduce a bilinear--convex reformulation and a tailored B\&B algorithm whose complexity is exponential only in the number of segments and polynomial in all other problem dimensions. To the best of our knowledge, this is the first PTAS for constrained pricing under FMNL demand.

Our results highlight that ignoring pricing constraints or relying on local heuristics can lead to significant revenue loss under heterogeneous demand, while expressive demand models remain tractable when segmentation is kept parsimonious. Promising directions for future work include pricing under continuous-mixture or nonparametric mixed logit models, where customer heterogeneity is more richly represented but computational challenges remain.

% In the interest of anonymization, please do not include acknowledgements in your submission.
%
%\begin{acks}
%
%	The authors would like to thank Dr. Maura Turolla of Telecom
%	Italia for providing specifications about the application scenario.
%
%	The work is supported by the \grantsponsor{GS501100001809}{National
%		Natural Science Foundation of
%		China}{http://dx.doi.org/10.13039/501100001809} under Grant
%	No.:~\grantnum{GS501100001809}{61273304\_a}
%	and~\grantnum[http://www.nnsf.cn/youngscientsts]{GS501100001809}{Young
%		Scientsts' Support Program}.
%
%
%\end{acks}

%\clearpage

% Bibliography
\bibliographystyle{ACM-Reference-Format}
\bibliography{ref}

\clearpage
% Appendix
\appendix

\begin{center}\huge
    \textsc{Appendix}
\end{center}
\appendix
This appendix provides additional technical details and numerical results that complement the main text. In particular, we include omitted proofs, extended algorithmic discussions, and supplementary computational experiments that further illustrate the performance of the proposed methods. These materials are intended to support the theoretical and empirical claims made in the main paper. The appendix is organized as follows:
\begin{itemize}
  \item \textbf{Appendix \ref{apd:proofs}: Missing Proofs.} 
  Contains proofs omitted from the main text, including proofs of Propositions~\ref{prop:MNL-ECP}--\ref{prop:MNL-complexity} and Theorem~\ref{prop:bnb_complexity}.
  \item \textbf{Appendix \ref{apd:detailed-BnB}: Details of the B\&B Algorithm.} 
  Provides a detailed description of the proposed branch-and-bound algorithm for constrained pricing under FMNL demand, including the node relaxation, branching and pruning strategies, and implementation details.
  \item \textbf{Appendix \ref{apd:additional-experiments}: Additional Experiments.} 
  Presents supplementary numerical results, including comparisons under MNL and FMNL models, unconstrained and constrained settings, and detailed runtime and node-exploration analyses.
\end{itemize}

\section{Missing Proofs}\label{apd:proofs}
This section contains proofs omitted from the main paper.
\subsection{Proof of Proposition \ref{prop:MNL-ECP}}
\begin{proof}
We show that the exponential cone constraints exactly reproduce the nonlinear terms in problem~\eqref{prob:PO-MNL-Sub}.
First, consider the constraint $(-s_i,u_i,1)\in\mathcal{K}_{\exp}$. Since $u_i>0$, this is equivalent to
\[
u_i e^{-s_i/u_i} \le 1
\quad\Longleftrightarrow\quad
s_i \ge u_i \ln u_i.
\]
In the objective, $s_i$ appears with a negative coefficient $-1/b_i<0$. Therefore, at optimality $s_i$ is minimized subject to feasibility, implying
$s_i = u_i \ln u_i.$
Substituting this equality into the objective recovers the term $-\tfrac{1}{b_i}u_i\ln u_i$ exactly.

Next, consider the constraint $(v_i,1,u_i)\in\mathcal{K}_{\exp}$, which is equivalent to
\[
e^{v_i} \le u_i
\quad\Longleftrightarrow\quad
v_i \le \ln u_i.
\]
The variables $v_i$ appear only in the linear constraints
\[
\sum_{i=1}^m \tfrac{\alpha_{ki}}{b_i} v_i 
\;\ge\; \sum_{i=1}^m \tfrac{\alpha_{ki} a_i}{b_i} - \beta_k.
\]
Since increasing $v_i$ relaxes these constraints and does not worsen the objective, it follows that at optimality
$v_i = \ln u_i.$
Thus, the exponential cone constraints enforce the logarithmic terms in problem~\eqref{prob:PO-MNL-Sub} exactly. Combining the above arguments, the exponential cone program is equivalent to problem~\eqref{prob:PO-MNL-Sub}.
\end{proof}

\subsection{Proof of Proposition \ref{prop:MNL-quasiconcavity}}
\begin{proof}
We establish the result by analyzing the numerator and denominator separately and then invoking standard results on fractional quasiconcave functions. We first write $R(\bu) = N(\bu)/D(\bu)$, where
\begin{itemize}
    \item the numerator is
    \[
    N(\bu) = \sum_{i=1}^m f_i(u_i), 
    \qquad 
    f_i(u_i) := \frac{1}{b_i}\bigl(a_i u_i - u_i \ln u_i\bigr),
    \]
    \item the denominator is
    \[
    D(\bu) = 1 + \sum_{j=1}^m u_j .
    \]
\end{itemize}
Now we consider the function $g(x) = -x \ln x$ for $x>0$, which satisfies
$g''(x) = -\frac{1}{x} < 0 .$
Hence, $g$ is strictly concave on $\mathbb{R}_{++}$. Each $f_i(u_i)$ is the sum of a linear term $\frac{a_i}{b_i}u_i$ and a strictly concave term $-\frac{1}{b_i}u_i\ln u_i$, and is therefore strictly concave. Since $N(\bu)$ is a sum of strictly concave functions, it is strictly concave on $\mathcal{U}$. Moreover,  the denominator $D(\bu)$ is affine and strictly positive on $\mathcal{U}$, since $u_i>0$ for all feasible $\bu$.

Now, for any $\alpha \in \mathbb{R}$, consider the superlevel set
\[
S_\alpha := \{\bu \in \mathcal{U} \mid R(\bu) \ge \alpha\}.
\]
This condition is equivalent to
\[
N(\bu) - \alpha D(\bu) \ge 0.
\]
Define $H_\alpha(\bu) := N(\bu) - \alpha D(\bu)$. Since $N(\bu)$ is strictly concave and $-\alpha D(\bu)$ is affine (and thus concave), $H_\alpha(\bu)$ is strictly concave. Therefore, the superlevel set $S_\alpha$ is strictly convex. Since all superlevel sets of $R(\bu)$ are strictly convex, $R(\bu)$ is strictly quasiconcave on $\mathcal{U}$.
\end{proof}

\subsection{Proof of Proposition \ref{prop:MNL-complexity}}
\begin{proof}
The proof proceeds in four steps.
\paragraph{(i) Monotonicity and slope bound.}
Recall the value function
\[
\phi(\theta) := \max_{\bp\in\mathcal{P}} G(\bp,\theta).
\]
By the envelope theorem, the value function $\phi$ is differentiable and satisfies
\[
\phi'(\theta)
= -\Bigl(1+\sum_{i=1}^m u_i^\star(\theta)\Bigr),
\qquad
u_i^\star(\theta)=e^{a_i-b_i p_i^\star(\theta)},
\]
where $\bp^\star(\theta) = (p_1^\star(\theta),\ldots,p_m^\star(\theta))$ denotes an optimal solution to the subproblem $\max_{\bp\in\mathcal{P}} G(\bp,\theta)$ for the given value of $\theta$. Since prices satisfy $p_i\le U_i$, we have
$u_i^\star(\theta)\ge e^{a_i-b_i U_i}$ for all $i$. Defining
\[
L_{\mathrm{slope}}
:= 1+\sum_{i=1}^m e^{a_i-b_i U_i},
\]
it follows that
\[
|\phi'(\theta)|\ge L_{\mathrm{slope}}>1
\qquad \forall \theta.
\]
Thus, $\phi(\theta)$ is strictly decreasing with a uniformly bounded slope, implying the existence and uniqueness of the root $\theta^\star$.

\paragraph{(ii) Accuracy of bisection with inexact subproblem solves.}
At each bisection iteration $k$, the algorithm evaluates $\phi(\theta_k)$ at the midpoint $\theta_k$ of the current interval.
Suppose the ECP solver returns an approximate value $\hat{\phi}(\theta_k)$ satisfying
\[
|\hat{\phi}(\theta_k)-\phi(\theta_k)|\le \delta.
\]

If $\mathrm{sign}(\hat{\phi}(\theta_k))=\mathrm{sign}(\phi(\theta_k))$, the bisection update is correct and the true root remains inside the updated interval.
If a sign error occurs, then necessarily $|\phi(\theta_k)|\le \delta$. By the mean value theorem and the slope bound from part (i),
\[
|\theta_k-\theta^\star|
\le \frac{|\phi(\theta_k)|}{|\phi'(\xi)|}
\le \frac{\delta}{L_{\mathrm{slope}}
}
< \delta.
\]
Hence, even in the worst case, the retained point $\theta_k$ is within $\delta$ of $\theta^\star$.

Choosing the bisection termination tolerance and solver accuracy as $\delta=\Theta(\varepsilon)$ guarantees
$|\hat{\theta}-\theta^\star|\le\varepsilon$.

\paragraph{(iii) Complexity of the ECP subproblem.}
Each ECP subproblem contains $O(m)$ exponential cone constraints and $O(m+K)$ variables.
The self-concordant barrier parameter is $\nu=\Theta(m)$.
A primal--dual interior-point method requires
$O\!\left(\sqrt{m}\log\tfrac{1}{\delta}\right)
= O\!\left(\sqrt{m}\log\tfrac{1}{\varepsilon}\right)$
Newton iterations to reach accuracy $\delta$.
Each iteration solves a linear system of dimension $O(m+K)=O(m)$, with cost $O(m^3)$.
Thus, the cost of solving one ECP subproblem is
\[
T_{\mathrm{ECP}}
= O\!\left(m^3\sqrt{m}\log\tfrac{1}{\varepsilon}\right).
\]
\paragraph{(iv) Total timecomplexity.}
The outer bisection loop requires
$O\!\left(\log\tfrac{\theta_{\max}-\theta_{\min}}{\varepsilon}\right)
= O\!\left(\log\tfrac{1}{\varepsilon}\right)$
iterations.
Multiplying by the cost per iteration yields the stated overall time complexity
\[
{O}\!\left(m^{3.5}\log^2\tfrac{1}{\varepsilon}\right).
\]
\end{proof}

\subsection{Proof of Theorem \ref{prop:bnb_complexity}}

\begin{proof}
The total complexity is the product of the number of nodes explored in the B\&B tree and the computational cost of processing each node (solving the ECP relaxation). We analyze each component and the required solver precision $\delta$.

\paragraph{(i) Precision Requirement ($\delta$).}
At each node, we compute a local upper bound ($UB_{node}$) by solving the ECP relaxation numerically with tolerance $\delta$. We also compute a local lower bound ($LB_{node}$) by projecting the solution to feasibility.
The total error at a node is the sum of the structural relaxation error (McCormick gap) and the numerical solver error ($\delta$). To ensure the algorithm terminates when the true gap is within $\epsilon$, we allocate half the error budget to the solver. Thus, we choose the solver tolerance $\delta$ to be of the same order of magnitude as the desired final accuracy $
\epsilon$, i.e.,
    $\delta = \Theta(\epsilon)$
This ensures that numerical noise does not dominate the relaxation gap, preserving the convergence rate of the B\&B scheme.

\paragraph{(ii) Per-Node Complexity (ECP Solver).} At each node, we solve a convex relaxation formulated as an ECP. To determine the computational cost, we analyze the problem structure and the convergence rate of IPM \citep{nesterov1994,andersen2000}.

\textit{Problem Structure:} The relaxation involves $m$ variables for prices (transformed to $u_i$), along with auxiliary variables for the exponential terms. Specifically, the formulation requires:
\begin{itemize}
    \item $m$ exponential cone blocks to model the entropy-like terms $s_i \ge u_i \ln u_i$ (where $(-s_i, u_i, 1) \in \mathcal{K}_{exp}$).
    \item $m$ exponential cone blocks to model the log-constraints $v_i \le \ln u_i$ (where $(v_i, 1, u_i) \in \mathcal{K}_{exp}$).
    \item Linear constraints for price bounds and resource capacities.
\end{itemize}
The total dimension of the problem (variables + constraints) is $n = \Theta(m + K)$.

 The complexity of an IPM depends on the self-concordant barrier parameter $\nu$ of the underlying cone. For a single exponential cone, the barrier parameter is $\nu_{exp} = 3$. Since the problem contains $2m$ such cones, the total barrier parameter is:
\begin{equation*}
    \nu = \sum \nu_{exp} = 3 \times (2m) = \Theta(m).
\end{equation*}

\textit{Iteration Count:} Standard IPM theory dictates that the number of Newton steps required to reduce the duality gap from an initial value to a tolerance $\delta$ is bounded by $O(\sqrt{\nu} \log(1/\delta))$. Substituting our barrier parameter and the chosen tolerance $\delta = \Theta(\epsilon)$:
\begin{equation*}
    N_{iter} = O\left( \sqrt{m} \log\left(\frac{1}{\epsilon}\right) \right)
\end{equation*}

\textit{Arithmetic Cost:} Each Newton step requires forming and solving the KKT system (a linear system of equations) to determine the search direction. For a dense system of size $n$, the computational cost is dominated by matrix factorization (e.g., Cholesky), which requires $O(n^3)$ arithmetic operations.

\textit{Total Node Cost:} Combining the iteration count with the cost per step, the total complexity to process a single node is:
\begin{equation*}
    T_{node} = N_{iter} \times O(n^3) = {O}\left( n^3 \sqrt{m} \log\left(\frac{1}{\epsilon}\right) \right)
\end{equation*}

\paragraph{(iii) Bounding the Node Count in the B\&B tree.}
The core of the complexity analysis relies on bounding the number of spatial subdivisions (nodes) required to reduce the structural relaxation gap below the tolerance $\epsilon$.

\textit{Relaxation Gap Analysis:} Consider a specific customer segment $t \in [T]$. The non-convexity arises from the bilinear term $y_t = \theta_t z_t$. At a given node, let the variable bounds be $[\underline{\theta}_t, \overline{\theta}_t]$ and $[\underline{z}_t, \overline{z}_t]$, with interval widths $\Delta \theta_t = \overline{\theta}_t - \underline{\theta}_t$ and $\Delta z_t = \overline{z}_t - \underline{z}_t$.
The convex relaxation uses the McCormick envelope to approximate the bilinear surface. It is a standard result (McCormick, 1976) that the maximum vertical distance (error) between the true bilinear surface and the convex envelope is bounded by:
\begin{equation*}
    \text{Err}_{Mc} \le \frac{1}{4} \Delta \theta_t \Delta z_t
\end{equation*}
This vertical error in the auxiliary variable $y_t$ propagates to the objective function value. Specifically, the optimality gap contribution from segment $t$ is proportional to this error. To guarantee that the total optimality gap across all $T$ segments is at most $\epsilon/2$ (allocating the other $\epsilon/2$ to solver noise), it is sufficient to enforce that the error for each individual segment is $O(\epsilon/T)$.
Thus, the termination condition for the B\&B procedure on segment $t$ is to reach a node size satisfying:
\begin{equation*}
    \Delta \theta_t \Delta z_t \le C \cdot \frac{\epsilon}{T} = O(\epsilon)
\end{equation*}
where $C$ is a problem-dependent constant related to the bounds of the denominators $z_t$.

\textit{Tree Depth and Leaf Count:} The B\&B algorithm creates children nodes by bisecting the interval of the variable with the largest range. Each bisection reduces the interval width of one variable by half, which reduces the product $\Delta \theta_t \Delta z_t$ by a factor of 2.
Let $A_0$ be the initial area of the domain for segment $t$. The depth of branching $k_t$ required to reduce the area $A_0$ to $O(\epsilon)$ is determined by:
\begin{equation*}
    \frac{A_0}{2^{k_t}} \le O(\epsilon) \implies 2^{k_t} \ge \frac{A_0}{O(\epsilon)} \implies k_t = O\left( \log \frac{1}{\epsilon} \right)
\end{equation*}
However, non-convex optimization generally requires exploring the "surface" of the domain. In the worst case, we cannot prune branches early, and we must explore a full grid covering the domain at this resolution. The number of leaf nodes $N_t$ corresponds to the number of sub-rectangles in this fine grid, which is proportional to $2^{k_t}$:
\begin{equation*}
    N_t \approx 2^{k_t} = O\left( \frac{1}{\epsilon} \right).
\end{equation*}
This confirms that for a single bilinear term, the complexity is polynomial in $1/\epsilon$ (specifically linear).

\textit{Total Nodes:} Since there are $T$ independent bilinear terms (one for each customer segment), the total search space is the Cartesian product of the grids for each segment. The total number of nodes is bounded by the product of the worst-case leaf counts for each segment:
\begin{equation*}
    \mathcal{N}(\epsilon) \approx \prod_{t=1}^T N_t = \prod_{t=1}^T O\left(\frac{1}{\epsilon}\right) = O\left( \left(\frac{1}{\epsilon}\right)^T \right)
\end{equation*}

\paragraph{(iv) Total Complexity.} 
Multiplying the node count by the per-node complexity yields the final bound:
\begin{equation*}
    T_{total} = \mathcal{N}(\epsilon) \times T_{node} = O\left( \left( \frac{1}{\epsilon} \right)^T \right) \cdot {O}\left( n^3 \sqrt{m} \log\left(\frac{1}{\epsilon}\right) \right)
\end{equation*}
When $K=\mathcal{O}(m)$, which is typically the case in practice, the total time complexity can be expressed more compactly as
\[
\mathcal{O}\!\left(\left(\tfrac{1}{\varepsilon}\right)^T m^{3.5}\log\!\left(\tfrac{1}{\varepsilon}\right)\right).
\]
\end{proof}

\section{Detailed B\&B Algorithm}\label{apd:detailed-BnB}

This section provides a detailed description of our B\&B algorithm used to solve the constrained pricing problem under the FMNL model. The algorithm exploits the special structure of the reformulation developed in Section~\ref{sec:pricing-FMNL}, in which all nonconvexity is isolated into a small number of bilinear constraints—one for each customer segment—while all remaining constraints are convex and admit an exponential-cone representation.

\paragraph{Node relaxation and bounding.}
At each node of the B\&B tree, the algorithm solves a convex relaxation obtained by replacing each bilinear equality $y_t = \theta_t z_t$ with its McCormick envelope over the current bounds of $(\theta_t, z_t)$. Together with the exponential-cone constraints that encode the transformed pricing model, this yields a convex ECP. Solving this relaxation provides a valid upper bound $U_N$ on the optimal objective value attainable within the corresponding subregion of the search space. If the relaxation is infeasible or if $U_N$ does not exceed the best known feasible objective value $R^*$, the node is pruned immediately.

\paragraph{Incumbent update and feasibility check.}
Whenever a node relaxation is solved (by MOSEK), the algorithm extracts a candidate price vector by mapping the relaxed solution back to prices via $\hat{p}_i = -\ln(\hat{u}_i)/b_i$ for all $i\in [m]$. If this candidate improves upon the current incumbent, the global lower bound $R^*$ and the incumbent solution $\bp^*$ are updated. To assess whether the relaxation is sufficiently tight, the algorithm computes the maximum violation of the bilinear constraints,
\[
\delta_{\max} = \max_t \left| \hat{y}_t - \hat{\theta}_t \hat{z}_t \right|.
\]
If $\delta_{\max} \le \varepsilon$, the current solution is guaranteed to be globally $\varepsilon$-optimal, and the algorithm terminates.

\paragraph{Branching strategy.}
If the bilinear constraints are not yet satisfied to the desired tolerance, the algorithm branches on the most violated segment. Specifically, it identifies the index $t^*$ attaining $\delta_{\max}$ and selects either $\theta_{t^*}$ or $z_{t^*}$ (whichever has the larger current interval) for branching. The chosen variable is bisected, generating two child nodes with tightened bounds. This targeted branching strategy focuses computational effort on the dominant source of nonconvexity and avoids unnecessary subdivision along dimensions that are already well resolved.

\paragraph{Node selection and implementation details.}
Nodes are explored using a BFS strategy, prioritizing nodes with larger upper bounds. To improve efficiency, the algorithm employs a persistent solver architecture: instead of rebuilding the optimization model at each node, only variable bounds are updated before re-solving the ECP. This significantly reduces overhead and allows warm-starting of the IP solver. Algorithm~\ref{alg:bb_compact} summarizes the full procedure.

\paragraph{Execution flow.}
Figure~\ref{fig:bb-flow-queue} illustrates the interaction between the B\&B tree and the priority queue. The left panel depicts the branching structure and pruning decisions, while the right panel shows the evolution of the node queue under BFS. The figure highlights how strong relaxations and early incumbent updates can lead to aggressive pruning, allowing the algorithm to terminate after exploring only a small fraction of the full search tree.

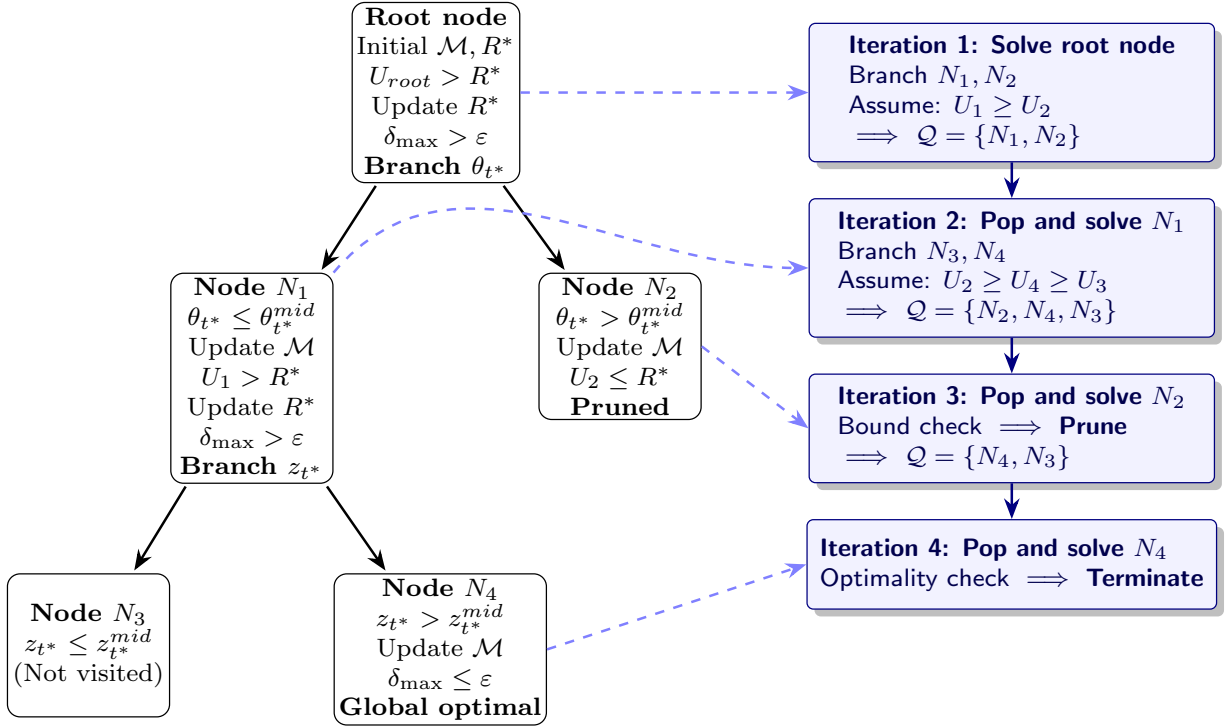
\begin{figure}[htb]
\centering
% Resizebox scales the figure to column width. 
% By reducing the internal height, the aspect ratio changes 
% so it takes up less vertical space on the page.
\resizebox{\columnwidth}{!}{%
\begin{tikzpicture}[
  >=Stealth,
  node distance=8mm,
  % --- UNIFIED FONT STYLE ---
  every node/.style={font=\scriptsize},
  % --- LEFT STYLE (Compact Tree) ---
  tree_node/.style={
    draw, rectangle, rounded corners,
    align=center, 
    inner sep=1.5pt,       % Reduced padding
    minimum width=1.8cm,
    minimum height=1.6cm   % Reduced minimum height (was 2.2cm)
  },
  % --- RIGHT STYLE (Wide Queue) ---
  queue_node/.style={
    draw=blue!50!black, rectangle, rounded corners=2pt,
    fill=blue!5,
    align=left, 
    font=\scriptsize\sffamily, 
    inner sep=4pt,         % Reduced padding
    minimum width=4.5cm,
    minimum height=1.0cm,  % Reduced minimum height
    text=blue!30!black,
    drop shadow
  },
  edge/.style={->, thick, shorten >=1pt, shorten <=1pt},
  logic_edge/.style={->, dashed, color=blue!50, thick}
]

% ==========================================
% COLUMN 1: THE B&B TREE
% ==========================================

% --- Root Node (N0) ---
\node[tree_node] (root) {
    \textbf{Root node}\\
    Initial $\mathcal{M}, R^*$\\
    $U_{root} > R^*$\\
    Update $R^*$\\
    $\delta_{\max} > \varepsilon$\\
    \textbf{Branch $\theta_{t^*}$}
};

% --- Level 1 ---
% REDUCED VERTICAL DISTANCE: 20mm -> 10mm
\node[tree_node] (n1) [below left=10mm and 2mm of root] {
    \textbf{Node $N_1$}\\
    $\theta_{t^*} \le \theta_{t^*}^{mid}$\\
    Update $\mathcal{M}$\\
    $U_1 > R^*$\\
    Update $R^*$\\
    $\delta_{\max} > \varepsilon$\\
    \textbf{Branch $z_{t^*}$}
};

\node[tree_node] (n2) [below right=10mm and 2mm of root] {
    \textbf{Node $N_2$}\\
    $\theta_{t^*} > \theta_{t^*}^{mid}$\\
    Update $\mathcal{M}$\\
    $U_2 \le R^*$\\
    \textbf{Pruned}
};

% --- Level 2 ---
% REDUCED VERTICAL DISTANCE: 20mm -> 10mm
\node[tree_node] (n4) [below left=10mm and 0mm of n1] {
    \textbf{Node $N_3$}\\
    $z_{t^*} \le z_{t^*}^{mid}$\\
    (Not visited)
};

\node[tree_node] (n3) [below right=10mm and 0mm of n1] {
    \textbf{Node $N_4$}\\
    $z_{t^*} > z_{t^*}^{mid}$\\
    Update $\mathcal{M}$\\
    $\delta_{\max} \le \varepsilon$\\
    \textbf{Global optimal}
};

% --- Tree Edges ---
\draw[edge] (root) -- (n1);
\draw[edge] (root) -- (n2);
\draw[edge] (n1) -- (n4);
\draw[edge] (n1) -- (n3);

% ==========================================
% COLUMN 2: QUEUE LOGIC FLOWCHART
% ==========================================

% Position q1 relative to root
\node[queue_node] (q1) [right=3.2cm of root] {
    \textbf{Iteration 1: Solve root node}\\
    Branch $N_1, N_2$\\
    Assume: $U_{1} \ge U_{2}$\\
    $\implies$ $\mathcal{Q}=\{N_1,N_2\}$
};

% REDUCED VERTICAL GAP: 0.8cm -> 0.4cm
\node[queue_node] (q2) [below=0.4cm of q1] {
    \textbf{Iteration 2: Pop and solve $N_1$}\\
    Branch $N_3, N_4$\\
    Assume: $U_{2} \ge U_{4} \ge U_3$\\
    $\implies$ $\mathcal{Q}=\{N_2, N_4, N_3\}$
};

\node[queue_node] (q3) [below=0.4cm of q2] {
    \textbf{Iteration 3: Pop and solve $N_2$}\\
    Bound check $\implies$ \textbf{Prune}\\
    $\implies$ $\mathcal{Q}=\{N_4, N_3\}$
};

\node[queue_node] (q4) [below=0.4cm of q3] {
    \textbf{Iteration 4: Pop and solve $N_4$}\\
    Optimality check $\implies$ \textbf{Terminate}
};

% --- Logic Flow Arrows (Vertical) ---
\draw[->, thick, blue!50!black] (q1) -- (q2);
\draw[->, thick, blue!50!black] (q2) -- (q3);
\draw[->, thick, blue!50!black] (q3) -- (q4);

% --- Connection Arrows (Horizontal/Curved) ---

% 1. Root -> Q1
\draw[logic_edge] (root.east) -- (q1.west);

% 2. N1 -> Q2
% Adjusted curve angle (out=-50) to dive steeper under N2 due to tight spacing
\draw[logic_edge] (n1.north east) to[out=50, in=180] (q2.west);

% 3. N2 -> Q3
\draw[logic_edge] (n2.east) -- (q3.west);

% 4. N3 -> Q4
\draw[logic_edge] (n3.east) -- (q4.west);

\end{tikzpicture}%
}
\caption{B\&B execution flow. \textbf{Left:} The tree structure. \textbf{Right:} Queue operations.}
\label{fig:bb-flow-queue}
\end{figure}

Overall, the proposed B\&B algorithm systematically refines the feasible region only along the bilinear dimensions induced by customer heterogeneity, while leveraging convex optimization to efficiently bound and prune the search space. This structure is the key to achieving global $\varepsilon$-optimality with computational complexity that is exponential only in the number of customer segments and polynomial in all other problem dimensions.

\section{Additional Experiments}\label{apd:additional-experiments}
We report additional numerical experiments that further evaluate the proposed methods. 
\subsection{Constrained Pricing under MNL: B\&B, Bisection--ECP, and Gradient-Based Methods}
\label{apd:experiment-MNL}
\begin{figure}[htb]
    \centering
    \includegraphics[width=\linewidth]{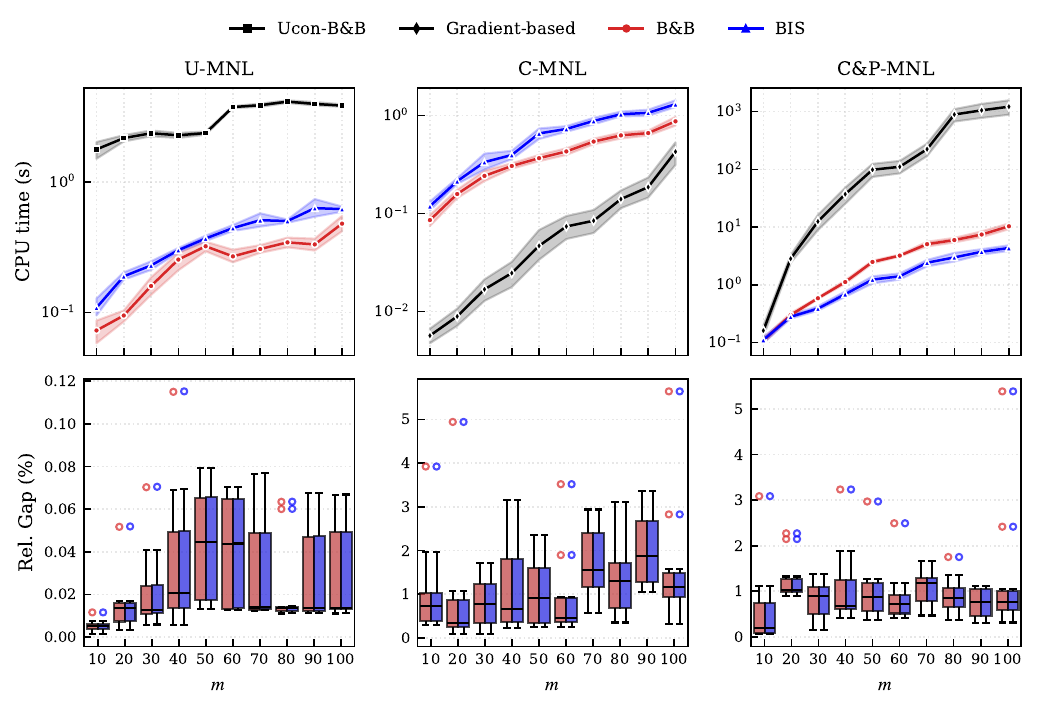}
    \caption{Our proposed method (B\&B and BIS) vs. the Ucon-B\&B method in \cite{ruben2022mnsc} and the gradient-based approach on different settings of the pricing problem under MNL.}
    \label{fig:MNL}
\end{figure}

Figure~\ref{fig:MNL} describes the performance results of all methods on the price optimization problem under MNL model, with line plots representing a 95\% confidence interval. The first row illustrates the runtime across all methods, while the second row displays the relative gap of our proposed methods (B\&B and BIS) against the baselines (Ucon-B\&B and gradient-based).

In the first column, which covers the unconstrained MNL setting, denoted by ``U-MNL'', the gradient-based method is excluded as it serves as a warm start for the Ucon-B\&B. The results show that the Ucon-B\&B is notably slower than our exact methods, with our approaches providing slightly better optimal solutions, up to 0.1\% gap. The second column evaluates the problem with capacity constraints, denoted by ``C-MNL''. While the gradient-based method’s runtime is slightly lower, it suffers from a significant optimality gap. In contrast, B\&B and BIS consistently find optimal solutions of all C-MNL instances, improving solution quality by up to 5\% within a few seconds—typically staying well below 1 second.

The final column explores the most complex scenario involving both pairwise-price and capacity constraints, denoted by ``C\&P-MNL''. Here, the addition of pairwise price constraints significantly hinders the runtime of the gradient-based method, whereas our methods remain robust and less affected. As shown in the final gap figure, the exact solutions provided by B\&B and BIS outperform the gradient-based near-optimal solutions by a margin of up to 5\%, verifying their superior precision and efficiency in constrained environments.

\subsection{Unconstrained Pricing under FMNL}\label{apd:ucon-FMNL}
\begin{figure}[htb]
    \centering
    \includegraphics[width=\linewidth]{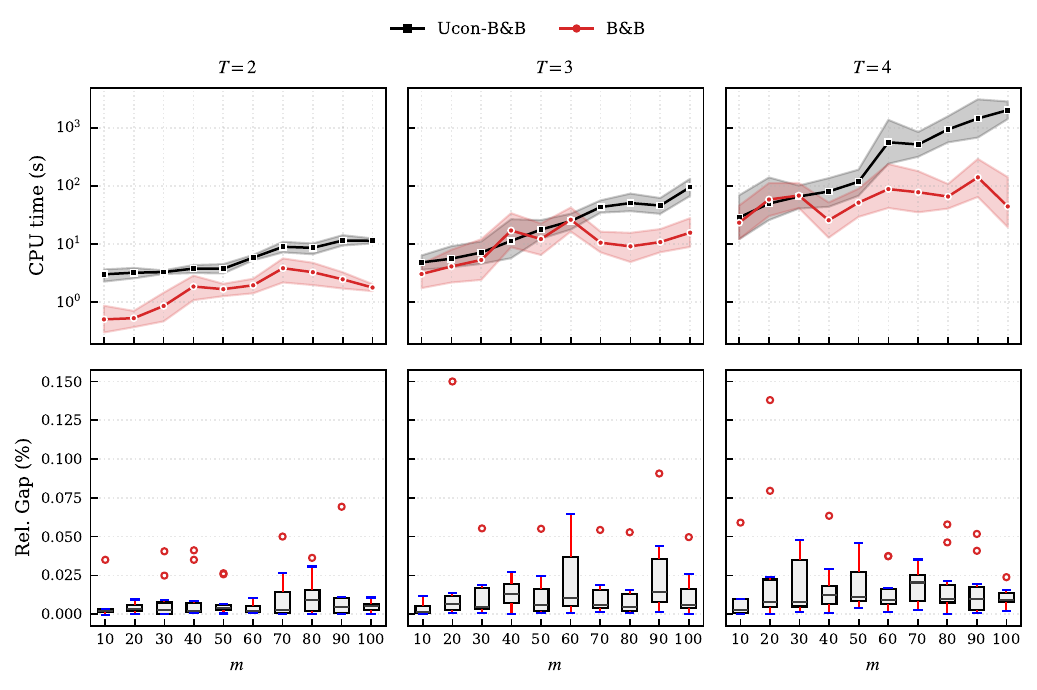}
    \caption{Our B\&B method vs. Ucon-B\&B algorithm on the U-FMNL dataset.}
    \label{fig:obj_time_FMNL}
\end{figure}
We next compare our proposed B\&B algorithm with the Ucon-B\&B method of \citep{ruben2022mnsc} for unconstrained pricing under FMNL demand. Figure~\ref{fig:obj_time_FMNL} summarizes the results in terms of both computational efficiency and solution quality. Since the benchmark B\&B method does not impose a runtime limit, we likewise allow our B\&B algorithm to run without a time cap in order to ensure a fair comparison.

As shown in the first row of Figure~\ref{fig:obj_time_FMNL}, the proposed B\&B method consistently requires less CPU time than the Ucon-B\&B across almost all tested numbers of products $m$ and customer segments $T\in\{2,3,4\}$. The performance gap becomes particularly pronounced as problem complexity increases. For example, when $T=4$, the benchmark B\&B frequently exceeds one hour of runtime for instances with $m\in\{90,100\}$ products. In contrast, our method reliably computes globally optimal solutions in under 1{,}000 seconds for the same instances, demonstrating substantially better scalability.

The second row of Figure~\ref{fig:obj_time_FMNL} reports the optimality gaps. Across all instances, our B\&B method achieves gaps that are comparable to or slightly smaller than those obtained by the benchmark method, with improvements of up to $0.15\%$. Even for the largest instances considered, our algorithm maintains tight and stable optimality gaps, indicating robust numerical performance.

Overall, these results confirm that our B\&B method is not only significantly faster but also at least as accurate—and in some cases more accurate—than the current state-of-the-art. This empirical evidence aligns with the theoretical complexity analysis in Theorem~\ref{prop:bnb_complexity}, which predicts a lower polynomial dependence on the problem dimension for our formulation.

\subsection{Pricing under FMNL with Capacity Constraints}
\label{apd:FMNL-C-constraints}
\begin{figure}[!h]
    \centering
    \includegraphics[width=\linewidth]{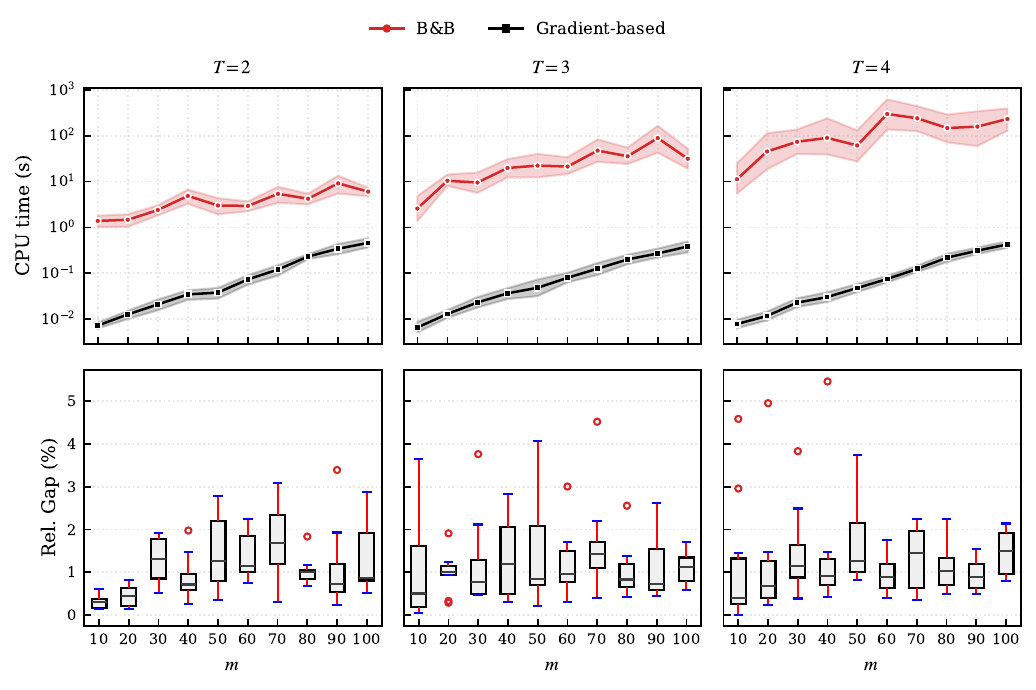}
    \caption{Our B\&B method vs. the gradient-based algorithm on the C-FMNL dataset.}
    \label{fig:obj_time_FMNL_Price}
\end{figure}
We next report numerical results for instances with only capacity and bound constraints, referred to as the C-FMNL setting. Figure~\ref{fig:obj_time_FMNL_Price} summarizes the performance of the proposed B\&B method under C-FMNL across different numbers of customer segments ($T=2,3,4$). In this setting, the gradient-based local search method achieves very low runtimes—often under one second—reflecting its lightweight computational nature. However, this speed comes at the expense of solution quality, as the gradient-based approach does not provide global optimality guarantees. As shown by the relative optimality gap plots, the gradient-based method can be up to 5\% worse than the true optimum identified by our B\&B algorithm.

In contrast, although the B\&B method requires higher computational effort, it consistently identifies globally optimal solutions, with CPU times remaining below 1{,}000 seconds even in the most challenging cases. Importantly, incorporating capacity constraints does not significantly degrade the performance of the proposed B\&B approach. The observed runtimes for the C-FMNL and U-FMNL settings are nearly identical: a few seconds for $T=2$, tens of seconds for $T=3$, and a few hundred seconds for $T=4$.

From a practical standpoint, these results suggest that capacity constraints—while essential for realistic pricing decisions—do not materially increase the computational burden of global optimization when handled within the proposed framework. This robustness makes the B\&B method well suited for real-world pricing applications where capacity limits are ubiquitous and solution quality is critical.

\subsection{Constrained Pricing under FMNL,  Capacity and Pairwise Constraints}\label{appd:HC_FMNL}
In this section, we points out a saturation in the trade-off between the accuracy of the gradient-based algorithm and its short runtime. While the tolerance $\epsilon$ of the gradient-based method can be reduced to obtain results closer to the optimal solution in C-FMNL cases because its CPU time is lower than 1 second, doing that in the C\&P-FMNL setting may cause the gradient-based algorithm to reach the time limit.

\begin{figure}[htb]
    \centering
    \includegraphics[width=\linewidth]{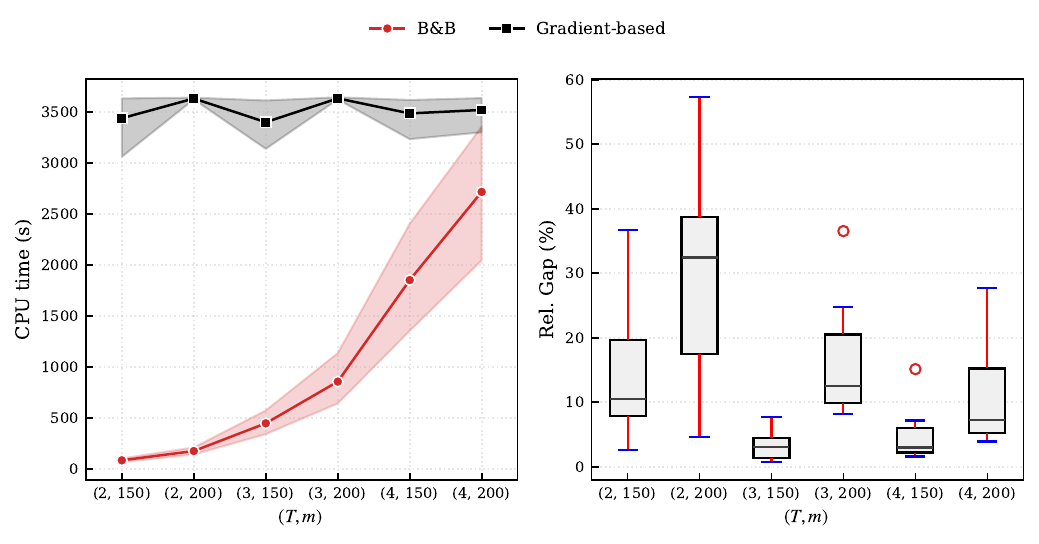}
    \caption{Our B\&B method vs. the gradient-based algorithm on the $m$-large C\&P-FMNL instances.}
    \label{fig:large_m}
\end{figure}

In the C\&P-FMNL setting with a large number of customers, as shown in Figure~\ref{fig:large_m}, the performance comparison between the B\&B method and the gradient-based method reveals a stark contrast in both computational efficiency and solution quality. As illustrated in the CPU time chart, even with an optimality tolerance of $\epsilon = 0.01$, the gradient-based method consistently reaches the predefined time limit of 3,600 seconds for nearly every instance. In contrast, the B\&B method demonstrates superior scalability, successfully solving all instances to optimality for complexity levels $T \leq 3$ and completing 14 out of 20 instances even at the highest complexity of $T=4$. The gap figure highlights a massive disparity in solution quality improvement. The proposed B\&B method achieves a significantly higher objective value, demonstrating a gap of over 50\% in certain instances compared to the gradient-based method. For example, at $(T, m) = (2, 200)$, the B\&B method shows a median improvement of approximately 30\%, with upper bounds reaching nearly 60\%. This substantial gap underscores that the gradient-based method not only struggles with computational limits in complex, large-scale scenarios but also fails to provide high-quality near-optimal solutions, whereas the B\&B method remains both reliable and highly precise.

\subsection{Node Exploration in B\&B Algorithm}
\begin{figure}[!h]
    \centering
    \includegraphics[width=\linewidth]{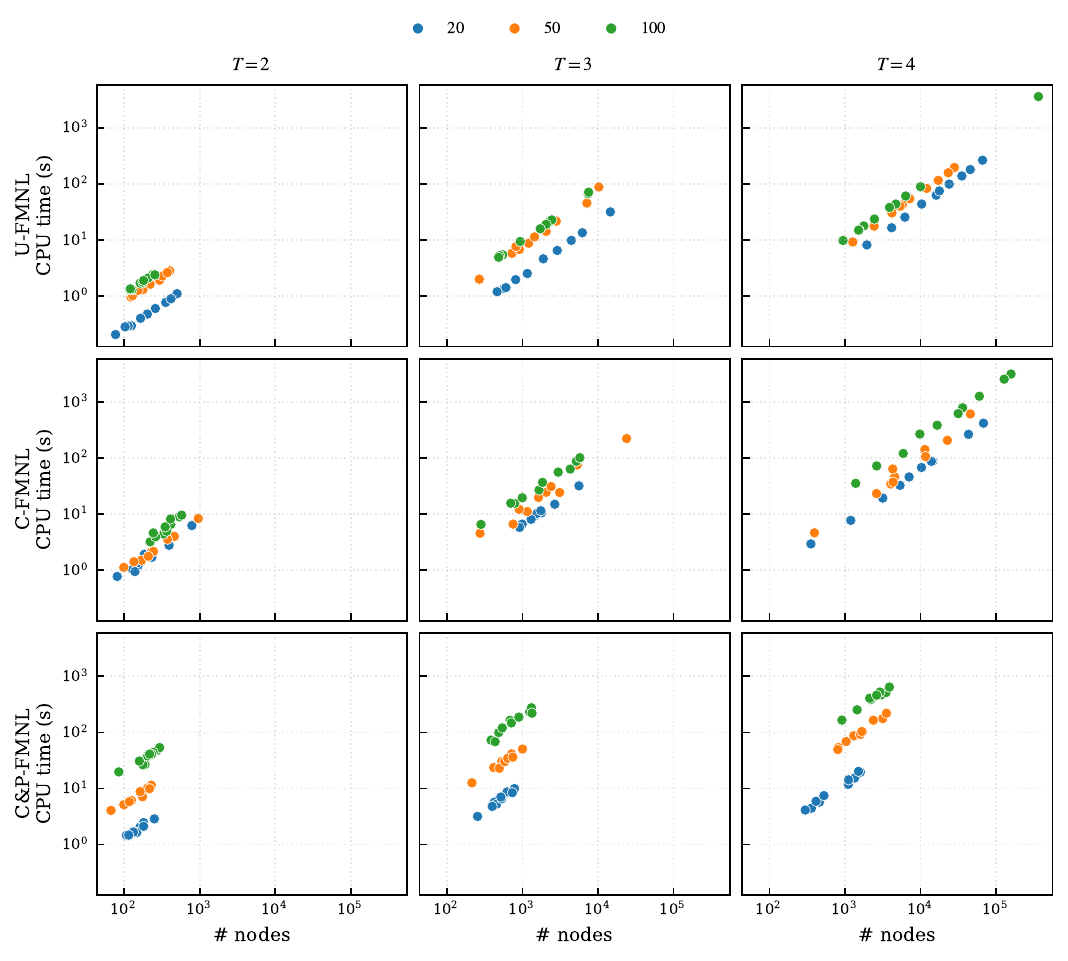}
    \caption{Node-CPU time relationship in the proposed B\&B method on different settings of the pricing problem under FMNL.}
    \label{fig:time_node}
\end{figure}
We report the number of nodes explored by the proposed B\&B algorithm under different experimental settings. Figure~\ref{fig:time_node} reveals a strong log--log correlation between CPU time and the number of explored nodes across all settings, confirming that the size of the B\&B search tree is the primary driver of computational effort. Within each setting, the dispersion of points reflects the effect of problem scale: instances with larger product sets ($m$) and more customer segments ($T$) tend to cluster in the upper-right region, indicating both higher node counts and longer runtimes.

A comparison across settings highlights distinct computational behaviors. The U-FMNL and C-FMNL settings exhibit similar patterns, with most instances for $T\in\{3,4\}$ requiring between $10^3$ and $10^5$ nodes to converge. The C-FMNL setting incurs slightly higher CPU time per node due to the additional overhead associated with enforcing capacity constraints. In contrast, the C\&P-FMNL setting benefits substantially from the inclusion of pairwise pricing constraints, which significantly tighten the feasible region and enable more effective pruning of the search tree. As a result, these instances typically converge after exploring fewer than $10^4$ nodes—often an order of magnitude fewer than in the other settings.

Although solving each node in the C\&P-FMNL setting is computationally more expensive, the pronounced reduction in the number of explored nodes more than offsets this cost. Consequently, problems in this setting are solved efficiently within comparable time limits to the U-FMNL and C-FMNL cases. These results illustrate an important insight: incorporating realistic pricing constraints not only improves solution relevance but can also enhance computational efficiency by strengthening relaxations and accelerating convergence.

\end{document}

\end{document}
%%%%%%%%%%%%%%%%%